\numberwithin{equation}{section}
\newtheorem{tw}{Theorem}[section]
\newtheorem{pr}[tw]{Proposition}
\newtheorem{lm}[tw]{Lemma}
\newtheorem{uw}[tw]{Remark}
\newtheorem{df}[tw]{Definition}
\newtheorem{np}[tw]{Example}
\newtheorem{wn}[tw]{Corollary}
\newtheorem{con}[tw]{Question}
\newcommand{\lcm}{\text{lcm}}
\newcommand{\rad}{\text{rad}}
\newcommand{\N}{\mathbb{N}}
\theoremstyle{remark}
\let\@@pmod\pmod
\DeclareRobustCommand{\pmod}{\@ifstar\@pmods\@@pmod}
\def\@pmods#1{\mkern4mu({\operator@font mod}\mkern 6mu#1)}
\title[Arithmetic properties of $p_\mathcal{A}(n,k)$]{A note on the restricted partition function $p_\mathcal{A}(n,k)$}
\author{Krystian Gajdzica}
\address{Institute of Mathematics \\
	Faculty of Mathematics and Computer Science \\
	Jagiellonian University in Cracow
}
\email{krystian.gajdzica@im.uj.edu.pl}
\keywords{partitions, divisibility properties of $p_\mathcal{A}(n,k)$, odd density of restricted partition function, partition function of a finite set, $m$-ary partitions}
\thanks{The research of the author was supported by a grant of the National Science Centre (NCN), Poland, no. UMO-2019/34/E/ST1/00094}
\begin{document}

\setlength{\parindent}{10mm}
\maketitle

\begin{abstract}
    Let $\mathcal{A}=(a_n)_{n\in\mathbb{N}_+}$ be a sequence of positive integers. Let $p_\mathcal{A}(n,k)$ denote the number of multi-color partitions of $n$ into parts in $\{a_1,\ldots,a_k\}$. We examine several arithmetic properties of the sequence $(p_\mathcal{A}(n,k) \pmod*{m})_{n\in\mathbb{N}}$ for an arbitrary fixed integer $m\geqslant2$. We investigate periodicity of the sequence and lower and upper bounds for the density of the set $\{n\in\N: p_\mathcal{A}(n,k)\equiv i\pmod*{m}\}$ for a fixed positive integer $k$ and $i\in\{0,1,\ldots, m-1\}$. In particular, we apply our results to the special cases of the sequence $\mathcal{A}$. Furthermore, we present some results related to restricted $m$-ary partitions.
    %for example, $A_2:=\{1^2,2^2,\ldots\}$, where $d\in\mathbb{N_+}$, or $A=A_w:=\{w(1),w(2),\ldots\}$, where $w$ is any polynomial such that $w(i)\in\mathbb{N_+}$ and $w(i)<w(i+1)$ for all $i\in\mathbb{N_+}.$
\end{abstract}

\section{Introduction}

Let $n$ be a positive integer. A partition of $n$ is a non-increasing sequence of positive integers $n_1,n_2,\ldots,n_k$ such that $$n_1+\ldots+n_k=n.$$ For example, there are seven partitions of $n=5$, namely, $(5)$, $(4,1)$, $(3,2)$, $(3,1,1)$, $(2,2,1)$, $(2,1,1,1)$ and $(1,1,1,1,1)$. By $p(n)$ we denote the number of partitions of $n$. As we see from the above example, $p(5)=7.$ In addition, we assume that $p(n)=0$ for $n<0$, and $p(0)=1$, because the empty partition is the only one in this case. In $1748$ Euler showed that the generating function for $p(n)$ takes the form
\begin{align*}
\sum_{n=0}^\infty p(n)x^n=\prod_{i=1}^\infty\frac{1}{1-x^i}.
\end{align*}
There is a wealth of literature related to the theory of partitions. Hence, for more details and information see Andrews' books \cite{GA1, GA2}. Nowadays, the most examined issues in this branch are asymptotic formulas and divisibility properties. The papers written by Ramanujan \cite{R}, Ono \cite{O} and Atkin \cite{A} are just a few examples of research devoted to arithmetic properties of $p(n)$. However, there are still many unsolved problems, for example the question about the convergence of the limit of odd density of the function $p(n)$ to $\frac{1}{2}$, that is to say
\begin{align*}
\lim_{N\to\infty}\frac{\#\{n\leqslant N: p(n)\equiv 1 \pmod*{2} \}}{N}=\frac{1}{2},
\end{align*}
remains open.

Now, let $\mathcal{A}=(a_n)_{n\in\mathbb{N}_+}$ be a sequence of non-decreasing positive integers. We can consider the multi-color partition function which enumerates only those partitions of $n$ that parts appear at $\mathcal{A}$. It has to be underlined that we assign the original colors for those elements $a_i, a_j$ which satisfy $ a_i=a_j$ for distinct $i$ and $j$, or simply distinguish such parts. We denote function of this kind by $p_\mathcal{A}(n)$. For instance, let $\mathcal{A}$ denote the sequence of the form $\mathcal{A}=(1,2,2,3,3,3,4,4,4,4,\ldots)$, in which an element of size $l$ can come in $l$ distinct colours. In this case $p_{\mathcal{A}}(n)$ counts the number of so called plane partitions (see Andrews \cite[Chapter~10]{GA1} or \cite[Chapter~11]{GA2} ). Furthermore, we may restrict the function $p(n)$ even more by looking only on these partitions of $n$, which parts are among the first $k$ positions of $\mathcal{A}$, in other words we introduce a function $p_\mathcal{A}(n,k):=p_{\mathcal{A}_k}(n)$, where $\mathcal{A}_k=(a_i)_{i=1}^k$. The generating function for $p_\mathcal{A}(n,k)$ is given by 
\begin{align}
    \sum_{n=0}^\infty p_\mathcal{A}(n,k)x^n=\prod_{i=1}^k\frac{1}{1-x^{a_i}}.
\end{align}
Arithmetic properties of $p_\mathcal{A}(n,k)$ were explored especially by Nathanson \cite{MBN}, \linebreak Almkvist \cite{GA} and R\o dseth and Sellers \cite{RS}. In this paper we mainly investigate periodicity and both upper and lower bounds for odd density of the function $p_\mathcal{A}(n,k)$ for a fixed positive integer $k$. In particular, we generalize results obtained in \cite{K} by Karhadkar.
Additionally, we present results of computations of some densities of the function $p_\mathcal{A}(n,k)$ in special cases of the sequence $\mathcal{A}$ and small values of $k$.

This paper is organized as follows. In Sec. 2 we introduce both concepts and tools which are systematically used afterwards. In Sec. 3 we derive a recurrent relation for $p_\mathcal{A}(n,k)$ and recall theorems related to its periodicity. Sec. 4 concerns the upper bound for the odd density of $p_\mathcal{A}(n,k)$. In Sec. 5 we examine the lower bound for the density of the set $\{n\in\N:p_\mathcal{A}(n,k)\not\equiv0\pmod*{m}\}$. In Sec. 6 we present some results related to restricted $m$-ary partitions. Finally, Sec. 7 is devoted to precise values of some (in particular odd) densities of $p_\mathcal{A}(n,k)$ in special cases, and problems arising from them.

\section{Preliminaries}
At first, we need to introduce some conventions and symbols, which are systematically used in our further examination. The symbols $\N=\{0,1,2,\ldots\}, \N_+=\N\setminus\{0\}$, $\N_{\geqslant2}=\N_+\setminus\{1\}$ and $\mathbb{P}$ denote the natural numbers, positive integers, integers greater than $1$ and prime numbers. Let $\mathcal{A}=(a_n)_{n\in\mathbb{N}_+}$ be a sequence of positive integers. Let $k\in\N_+$ and $n\in\N$. The function $p_\mathcal{A}(n,k)$ enumerates those partitions of $n$, which can be expressed as a sum of the $k$ first terms in $\mathcal{A}$, namely, $a_1,\ldots,a_k$. We assume that $p_\mathcal{A}(n,k)=0$ for $n<0$ and $p_\mathcal{A}(0,k)=1$ for $k\in\mathbb{N}$. By $T$ we denote the fundamental period of an infinite integer sequence. For a prime number $p$ and non-negative integers $s$ and $t>0$, we write $p^{s}||t$ if and only if $p^s\mid t$ and $p^{s+1}\nmid t$. Moreover, for a positive integer $i$ such that $i=p_1^{\alpha_1}p_2^{\alpha_2}\ldots p_s^{\alpha_s}$, where $s\in\mathbb{N}_+$, $p_j\in\mathbb{P}$ for every $j\in\{1,2,\ldots,s\}$ and $p_{j_1}\neq p_{j_2}$ for all $j_1,j_2\in\{1,2,\ldots,s\}$ with $j_1\neq j_2$, we define the radical of $i$ as $\rad(i)=p_1p_2\ldots p_s$.
Finally, let $S$ be \linebreak a subset of $\N_+$. We define the asymptotic (natural) density of $S$ as
\begin{align*}
d(S)=\lim_{n\to\infty}\frac{\#S\cap\{1,2,\ldots,n\}}{n}.    
\end{align*}
In particular, if $(x_n)_{n\in\N}$ is a fixed sequence of non-negative integers, then by odd density of $(x_n)_{n\in\N_+}$ we mean the value
\begin{align*}
\lim_{n\to\infty}\frac{\#\{i\leqslant n: x_i\equiv1\pmod*{2}\}}{n}.    
\end{align*}
This notation will simplify the formulation of our statements. 

\section{Periodicity of $p_\mathcal{A}(n,k) \pmod*{m}$}
At the beginning, we prove a result, which is relevant in estimation of the lower bound for the density of the set 
\begin{align*}
    \{n\in\mathbb{N}: p_\mathcal{A}(n,k)\not\equiv 0\pmod*{m}\}.
\end{align*}
 In order to achieve the goal, let us first find a recurrence relation for $p_\mathcal{A}$ and recall theorems related to the fundamental period of $p_\mathcal{A}(n,k).$ We will use them in two final sections in order to compute densities of some sets connected with the restricted partition function. For given $k$, these results reduce determining the odd density of $p_\mathcal{A}(n,k)$ to a finite task.
\begin{lm}
For all $n\in\mathbb{N}$ and $k\in\mathbb{N}_+$, the following equality holds 
\begin{align}
p_\mathcal{A}(n,k)=p_\mathcal{A}(n-a_k,k)+p_\mathcal{A}(n,k-1).
\end{align}
\end{lm}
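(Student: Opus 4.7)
The plan is to give a bijective/combinatorial argument that splits the partitions counted by $p_\mathcal{A}(n,k)$ into two disjoint classes, though a generating function manipulation would also work cleanly and is perhaps even shorter.

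First, I would partition the set of multi-color partitions of $n$ using parts from $\{a_1,\ldots,a_k\}$ (with the usual colouring convention for repeated values of $a_i$) according to whether or not the specific part $a_k$ (with its distinguished index/colour) appears. Those partitions in which $a_k$ does not appear are exactly the multi-color partitions of $n$ using parts from $\{a_1,\ldots,a_{k-1}\}$, and so they are counted by $p_\mathcal{A}(n,k-1)$. For those in which $a_k$ appears at least once, removing a single copy of $a_k$ yields a multi-color partition of $n-a_k$ still using parts from $\{a_1,\ldots,a_k\}$, and conversely appending one copy of $a_k$ to any such partition gives an element of the first class; this bijection shows that the second class has cardinality $p_\mathcal{A}(n-a_k,k)$, with the convention $p_\mathcal{A}(m,k)=0$ for $m<0$ covering the edge case $n<a_k$.

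Alternatively, I would argue via the generating function (1). Writing
\begin{align*}
\sum_{n=0}^\infty p_\mathcal{A}(n,k)x^n = \frac{1}{1-x^{a_k}}\prod_{i=1}^{k-1}\frac{1}{1-x^{a_i}} = \frac{1}{1-x^{a_k}}\sum_{n=0}^\infty p_\mathcal{A}(n,k-1)x^n,
\end{align*}
I would multiply through by $1-x^{a_k}$ and compare coefficients of $x^n$ on both sides, obtaining $p_\mathcal{A}(n,k)-p_\mathcal{A}(n-a_k,k)=p_\mathcal{A}(n,k-1)$, which rearranges to the claimed identity.

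There is really no obstacle here; the only care needed is in handling the boundary conventions ($p_\mathcal{A}(n,k)=0$ for $n<0$ and $p_\mathcal{A}(0,k)=1$), and, in the combinatorial argument, making explicit that the copies of $a_k$ with its specific index are being added/removed so that the multi-colour bookkeeping is consistent with the definition of $p_\mathcal{A}$ given in the introduction.
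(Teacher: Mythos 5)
Your proposal is correct. The generating-function alternative you sketch is essentially word-for-word the paper's own proof: multiply $\sum_n p_\mathcal{A}(n,k)x^n=\prod_{i=1}^k(1-x^{a_i})^{-1}$ by $1-x^{a_k}$ and compare coefficients, with the boundary conventions $p_\mathcal{A}(n,k)=0$ for $n<0$ and $p_\mathcal{A}(0,k)=1$ absorbing the edge cases. Your primary, combinatorial argument is a genuinely different (and equally valid) route: splitting the partitions counted by $p_\mathcal{A}(n,k)$ according to whether the distinguished part $a_k$ occurs, and removing one copy of $a_k$ in the second class, is a clean bijection, and you are right that the only care needed is to track the colour/index of $a_k$ so that repeated values among $a_1,\ldots,a_k$ are handled consistently. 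The combinatorial version has the advantage of explaining \emph{why} the recurrence holds; the generating-function version is shorter and sets up the same manipulation that the paper reuses in Section 5, which is presumably why the author chose it. Either suffices.
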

\begin{proof}
We assume that $p_\mathcal{A}(0,k)=1$ and $p_\mathcal{A}(n,k)=0$ for $n<0$. Let us consider the generating function for $p_\mathcal{A}(n,k)$. Obviously, we have
\begin{equation}
    \sum_{n=0}^\infty p_\mathcal{A}(n,k)x^n=\prod_{i=1}^k\frac{1}{1-x^{a_i}}.
\end{equation}
Multiplying both sides of the equality by $1-x^{a_k}$ we obtain the following:
\begin{align*}
    (1-x^{a_k})\sum_{n=0}^\infty p_\mathcal{A}(n,k)x^n=\prod_{i=1}^{k-1}\frac{1}{1-x^{a_i}}=\sum_{n=0}^\infty p_\mathcal{A}(n,k-1)x^n.
\end{align*}
If we rearrange the left-sided terms above, we deduce
\begin{align*}
    \sum_{n=0}^\infty p_\mathcal{A}(n,k)x^n -\sum_{n=0}^\infty p_\mathcal{A}(n-a_k,k)x^{n}=\sum_{n=0}^\infty p_\mathcal{A}(n,k-1)x^n.
\end{align*}
Since $p_\mathcal{A}(0,k)=1$ and $p_\mathcal{A}(j,k)=0$ for $j<0$, the equality $p_\mathcal{A}(n,k)-p_\mathcal{A}(n-a_k,k)=p_\mathcal{A}(n,k-1)$ holds for all $n\in\mathbb{N}$.
\end{proof}

Let us recall that a sequence $(x_n)_{n\in\mathbb{N}}$ is periodic, if there exists a positive integer $s$ such that $x_n=x_{n+s}$ for all sufficiently large $n$. The number $s$ is called the period of the infinite sequence. The least positive constant $s$ with this property is called the fundamental (or minimal period). Additionally, if $x_n=x_{n+s}$ holds for all $n$, then the sequence is called purely periodic.

The following two theorems were proven by Kwong in \cite{K1,K2}, respectively. It is worth to note that a slightly weaker version of the Theorem $3.3$ originally appeared in \cite{NW}. 
\begin{tw}
Let $m>1$ be a positive integer and let $(x_n)_{n\in\mathbb{N}}\in\N^\infty$ be fixed. If $m=p_1^{\alpha_1}p_2^{\alpha_2}\ldots p_k^{\alpha_k}$ is the prime factorization of $m$, then the period $T_m$ of $\{x_n \pmod*{m}\}_{n\in\N}$ satisfies
\begin{equation}
    T_m=
    \normalfont\lcm
    \{T_{p_i^{\alpha_i}}: 1\leqslant i\leqslant k\},
\end{equation}
where $T_{p_j^{\alpha_j}}$ denotes the period of $\{x_n \pmod*{p_j^{\alpha_j}}\}_{n\in\mathbb{N}}$ for each $j\in\{1,\ldots,k\}$.
\end{tw}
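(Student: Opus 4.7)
The plan is to use the Chinese Remainder Theorem to decouple congruences modulo $m$ into congruences modulo each prime power $p_i^{\alpha_i}$, together with the standard fact that for an (eventually) periodic sequence, the fundamental period divides every period. Write $L=\lcm\{T_{p_i^{\alpha_i}}:1\le i\le k\}$; the goal is to prove $T_m=L$ by establishing divisibility in both directions.

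First I would show $T_m\mid L$. Since $L$ is a common multiple of each $T_{p_i^{\alpha_i}}$, for every $i$ and for all sufficiently large $n$ we have
\begin{equation*}
x_{n+L}\equiv x_n\pmod{p_i^{\alpha_i}}.
\end{equation*}
Because the prime powers $p_1^{\alpha_1},\ldots,p_k^{\alpha_k}$ are pairwise coprime and their product is $m$, the Chinese Remainder Theorem yields $x_{n+L}\equiv x_n\pmod{m}$ for all sufficiently large $n$. Hence $L$ is a period of $\{x_n\pmod{m}\}_{n\in\N}$. By the standard property that the fundamental period of an eventually periodic sequence divides every period, we conclude $T_m\mid L$.

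For the reverse divisibility $L\mid T_m$, I would argue that for each $i$ the relation $x_{n+T_m}\equiv x_n\pmod{m}$, valid eventually, immediately reduces modulo $p_i^{\alpha_i}\mid m$ to $x_{n+T_m}\equiv x_n\pmod{p_i^{\alpha_i}}$. Thus $T_m$ is a period of $\{x_n\pmod{p_i^{\alpha_i}}\}_{n\in\N}$, so by the same fundamental-period property $T_{p_i^{\alpha_i}}\mid T_m$ for every $i$. Taking the least common multiple over $i$ gives $L\mid T_m$. Combining the two divisibilities produces $T_m=L$, which is the claim.

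The only mildly delicate point is the auxiliary lemma that the fundamental period divides every period of an eventually periodic sequence; this is standard (if $s$ and $t$ are both eventual periods, then so is $\gcd(s,t)$ by the Euclidean algorithm applied on the tail where both periodicities hold, forcing the minimal period to divide each of them), so I would either cite it or dispose of it in one line. Everything else is a direct application of the Chinese Remainder Theorem, and no structural property of $p_\mathcal{A}(n,k)$ itself is needed — the statement holds for an arbitrary sequence of non-negative integers.
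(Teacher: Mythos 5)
Your proof is correct. Note that the paper does not actually prove this statement: it is quoted as a known result of Kwong (and, in weaker form, Nijenhuis--Wilf), so there is no in-paper argument to compare against. Your two-way divisibility argument --- Chinese Remainder Theorem to pass from simultaneous periodicity modulo the pairwise coprime prime powers $p_i^{\alpha_i}$ to periodicity modulo $m$, reduction modulo $p_i^{\alpha_i}\mid m$ for the converse, and the auxiliary fact that the fundamental period of an eventually periodic sequence divides every period (via the $\gcd$ of two periods being a period) --- is the standard proof and correctly supplies what the paper leaves to the reference. The only point worth making explicit is that the statement tacitly assumes the periods $T_{p_i^{\alpha_i}}$ exist, i.e.\ that $(x_n \bmod p_i^{\alpha_i})_{n}$ is eventually periodic for each $i$; under that hypothesis your argument is complete.
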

The second theorem gives us an explicit recipe, how to determine the fundamental period $(p_\mathcal{A}(n,k)\pmod*{p^N})_{n\in\N_+}$ for a fixed positive integer $N$ and prime number $p$. 

Before stating the claim let us remind that for a given positive integer $n$ and prime number $p$, the $p-$free part of $n$ is the number $n/p^\alpha$, where $\alpha$ satisfies $p^\alpha || n$.
\begin{tw}
Let $p\in\mathbb{P}$ and $N\in\N_+$ be arbitrary. Let $L$ be the $p$-free part of $\normalfont\lcm
\{a_i: i\in\{1,\ldots,k\}\}$, and $b$ be the least integer such that
\begin{equation}
    p^b\geqslant\sum_{i=1}^kp^{e(a_i)},
\end{equation}
where $p^{e(a_i)}||a_i$ for each $i\in\{1,\ldots ,k\}$. Then $(p_\mathcal{A}(n,k) \pmod*{p^N})_{n\in\N}$ is purely periodic with minimal period
\begin{equation}
    p^{N+b-1}L.
\end{equation}
\end{tw}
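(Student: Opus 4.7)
The plan is to work with the generating function $G(x)=\prod_{i=1}^k(1-x^{a_i})^{-1}$ in the ring $(\Z/p^N\Z)[[x]]$ and show that modulo $p^N$ it has the form $F(x)/(1-x^P)$ for some polynomial $F$ of degree less than $P$, with $P=p^{N+b-1}L$ being the smallest such exponent. Equivalently, writing $D(x):=\prod_{i=1}^k(1-x^{a_i})$, I must prove that $D(x)$ divides $1-x^P$ in $(\Z/p^N\Z)[x]$ and that $P$ is the smallest positive integer for which this divisibility holds.

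The core tool is the Frobenius-lifting identity
\[
(1-x^a)^{p^M} \equiv (1-x^{ap^j})^{p^{M-j}} \pmod{p^{M-j+1}}, \qquad 0\leq j\leq M,
\]
obtained by repeatedly applying ``$f\equiv g\pmod{p^s}\Rightarrow f^p\equiv g^p\pmod{p^{s+1}}$'' to the base congruence $(1-x)^p\equiv 1-x^p\pmod p$. First I would dispose of the $N=1$ case: writing $a_i=p^{e_i}L_i$ with $\gcd(L_i,p)=1$, the Frobenius identity in $\F_p[x]$ gives
\[
D(x) \equiv \prod_{i=1}^k (1-x^{L_i})^{p^{e_i}} \pmod p.
\]
Since $L_i\mid L$ one has $(1-x^{L_i})\mid(1-x^L)$ in $\Z[x]$, and $\sum_{i=1}^k p^{e_i}\leq p^b$ by the definition of $b$, so $D(x)$ divides $(1-x^L)^{p^b}=1-x^{Lp^b}$ in $\F_p[x]$. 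To propagate this divisibility to $(\Z/p^N\Z)[x]$ I would apply the identity above with $j=b$, $M=N+b-1$ and $a=L$, obtaining $(1-x^L)^{p^{N+b-1}}\equiv(1-x^{Lp^b})^{p^{N-1}}\pmod{p^N}$, and combine this with the lifted $\F_p[x]$-quotient of $D(x)$ into $(1-x^{Lp^b})^{p^{N-1}}$, raised to the $p^{N-1}$-th power and controlled by the lifting lemma, to deduce $D(x)\mid 1-x^P$ in $(\Z/p^N\Z)[x]$.

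The main obstacle is the minimality of $P$. One needs to rule out $P/q$ as a period for every prime $q\mid P$. For $q\mid L$, I would argue at the level of $\F_p[x]$: the factorization of $D(x)$ involves cyclotomic factors $\Phi_d(x)$ for every $d$ dividing some $L_i$, and hence for every $d\mid L$, so the period over $\F_p$ must already be a multiple of $L$. For $q=p$, one must show the period strictly multiplies by $p$ when passing from modulus $p^{N-1}$ to $p^N$, which comes down to the \emph{sharpness} of the Frobenius-lifting identity: the congruence above fails modulo $p^{M-j+2}$. Propagating this sharpness through the product $D(x)=\prod_i(1-x^{a_i})$, and matching the exponent with the precise value of $b$ coming from $p^{b-1}<\sum_i p^{e_i}\leq p^b$, is the delicate step.
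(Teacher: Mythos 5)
The paper itself does not prove this statement: it is Kwong's theorem, quoted from [K1, K2], so there is no internal proof to compare against. Judged on its own terms, your plan has the right skeleton --- reduce pure periodicity with period $P=p^{N+b-1}L$ to the divisibility $D(x):=\prod_{i=1}^k(1-x^{a_i})\mid 1-x^P$ in $(\Z/p^N\Z)[x]$, get the prime-to-$p$ part of the minimal period from cyclotomic factors over $\F_p$, and control the $p$-part by Frobenius lifting --- but two of the three steps do not close. The divisibility step works for $N=1$, but for $N\geqslant 2$ your lifting chain, pushed through carefully (write $1-x^{Lp^b}\equiv D\tilde Q\pmod p$ and raise to the power $p^{N-1}$; the cross terms vanish mod $p^N$), yields $D(x)\mid (1-x^{Lp^b})^{p^{N-1}}$ in $(\Z/p^N\Z)[x]$. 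That is not the target: $(1-y)^{p^{N-1}}$ and $1-y^{p^{N-1}}$ agree only modulo $p$, not modulo $p^N$ (already $(1-y)^2\not\equiv 1-y^2\pmod 4$), so divisibility of the former by $D$ does not transfer to $1-x^{Lp^{N+b-1}}$. A different mechanism is needed here; Kwong's own argument works with the module of purely periodic sequences modulo $p^N$ and a lemma describing how the period behaves under division by $1-x^t$ (partial summation along residue classes mod $t$), rather than with polynomial divisibility by $1-x^P$ obtained from Frobenius powers.

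The second gap is the minimality of the $p$-part, which is exactly the hard half of the theorem and which you only name ("the delicate step") without an argument. One must show both that the base exponent is exactly $b$ (matching $p^{b-1}<\sum_i p^{e(a_i)}\leqslant p^b$) and that the period is multiplied by exactly $p$ each time the modulus goes from $p^{N-1}$ to $p^N$; "sharpness of the Frobenius lifting" for a single factor $(1-x^a)$ does not obviously propagate through the product, since the relevant invariant is the multiplicity $\sum_i p^{e(a_i)}$ of the factor $1-x^L$ (more precisely of each irreducible factor of $1-x^L$ over $\F_p$) in $D$, and converting that multiplicity into the exact power of $p$ in the period modulo $p^N$ is the content of Kwong's proof. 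The prime-to-$p$ part of your minimality argument is sound in substance, though the phrase "for every $d\mid L$" is inaccurate: since $L=\lcm\{L_i\}$, a divisor of $L$ need not divide any single $L_i$; you only get the factors $\Phi_d$ for $d$ dividing some $L_i$, which still forces $L$ to divide the period. As written, then, the proposal is a reasonable outline whose two essential steps are respectively broken and missing.
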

\begin{uw}
The above theorems assert that the density of the set $\{n\in\mathbb{N}: p_\mathcal{A}(n,k)\equiv i \pmod*{m}\}$ for $i\in\{0,1,\ldots ,m-1\}$, is determined by the number of values of $n$ such that $p_\mathcal{A}(n,k)\equiv i \pmod*{m}$ over a finite range of $n$.
\end{uw}
The next result is a consequence of theorems mentioned above. We take its advantage to determine the lower bound for density in Sec. $5$.
\begin{wn}
If $k$ and $m$ are fixed positive integers with $m\geqslant 2$, then the function $p_\mathcal{A}(n,k) \pmod*{m}$ is periodic on $n$. Moreover, $m^{k-1}
\normalfont\lcm
\{a_1,\ldots ,a_k\}$ is a period of $p_\mathcal{A}(n,k)\pmod*{m}$. 
\end{wn}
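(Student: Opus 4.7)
The plan is to prove both assertions simultaneously by induction on $k$, leveraging Lemma 3.1 directly rather than invoking Theorems 3.2 and 3.3. Set $L_k := \lcm\{a_1,\ldots,a_k\}$; the key observation is that $L_k$ is the natural atomic shift for $p_\mathcal{A}(\cdot, k)$, because iterating the recurrence in Lemma 3.1 across $L_k/a_k$ consecutive steps produces a first-order difference whose right-hand side only involves $p_\mathcal{A}(\cdot, k-1)$.

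For the base case $k=1$, one has $p_\mathcal{A}(n,1) = 1$ if $a_1 \mid n$ and $0$ otherwise, so $a_1 = m^{0} \cdot \lcm\{a_1\}$ is visibly a period modulo $m$. For the inductive step, assuming the corollary for $k-1$, iterating $p_\mathcal{A}(n,k) - p_\mathcal{A}(n-a_k,k) = p_\mathcal{A}(n,k-1)$ over $L_k/a_k$ steps gives
\begin{equation*}
p_\mathcal{A}(n,k) - p_\mathcal{A}(n - L_k, k) = \sum_{i=0}^{L_k/a_k - 1} p_\mathcal{A}(n - i a_k, k-1) =: g(n).
\end{equation*}
Since $L_{k-1} \mid L_k$, the induction hypothesis guarantees that $g(n) \pmod*{m}$ is periodic in $n$ with period dividing $m^{k-2} L_k$.

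Next, I telescope $p_\mathcal{A}(n,k) - p_\mathcal{A}(n - m^{k-1}L_k, k)$ as a sum of $m^{k-1}$ consecutive $L_k$-shifted differences, obtaining $\sum_{r=0}^{m^{k-1}-1} g(n - rL_k)$. Because $g$ has period dividing $m^{k-2} L_k$, the residue $g(n - rL_k) \pmod*{m}$ depends only on $r \pmod*{m^{k-2}}$; hence the $m^{k-1}$ summands partition into $m^{k-2}$ groups of exactly $m$ identical residues, and the total is a multiple of $m$. This yields $p_\mathcal{A}(n,k) \equiv p_\mathcal{A}(n - m^{k-1}L_k, k) \pmod*{m}$ and completes the induction, simultaneously establishing both periodicity and the claimed bound.

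The main bookkeeping concern is the period-inheritance for $g$ under evaluation at $n - rL_k$: one must check that shifting the argument by multiples of $L_k$ cycles through the period of $g$ in exactly $m^{k-2}$ steps. This reduces to the observation that the multiples of $L_k$ inside $\Z/(m^{k-2}L_k)\Z$ form a subgroup of index $m^{k-2}$, so each coset is visited the same number of times as $r$ runs through $\{0,1,\ldots,m^{k-1}-1\}$. This is a transparent modular check rather than a deep obstacle; the substantive content of the proof is the combined use of the recurrence and the divisibility $L_{k-1}\mid L_k$ that channels the induction forward.
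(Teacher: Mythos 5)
Your argument is correct, and it is genuinely different from the paper's. The paper does not induct on $k$ at all: it invokes Kwong's theorems (Theorems 3.2 and 3.3), splits $m$ into prime powers $p_t^{\alpha_t}$, and bounds the exponent $b_t$ in Kwong's exact formula for the minimal period via $\sum_{i=1}^k p_t^{e(a_i)}\leqslant k\,p_t^{\gamma_t}\leqslant p_t^{k-1+\gamma_t}$, then assembles the prime-power periods with an lcm. Your route is a self-contained induction on $k$ using only the recurrence of Lemma 3.1: telescoping $L_k/a_k$ times isolates the level-$(k-1)$ function, and telescoping $m^{k-1}$ further times in steps of $L_k$ lets you group the summands $g(n-rL_k)$ into $m^{k-2}$ classes (by $r\bmod m^{k-2}$, using $L_{k-1}\mid L_k$ and the inductive period $m^{k-2}L_{k-1}$) each repeated exactly $m$ times, killing the sum modulo $m$. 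What your approach buys is elementarity and independence from Kwong's machinery; what the paper's approach buys is access to the \emph{minimal} period (and hence sharper divisibility information, which the paper reuses in Section 6), whereas your induction only certifies that $m^{k-1}\lcm\{a_1,\ldots,a_k\}$ is \emph{a} period --- exactly what the corollary asserts. One small point worth making explicit: the shifts $n-ia_k$ and $n-rL_k$ can be negative for small $n$, so your congruences should be read for all sufficiently large $n$; this matches the paper's definition of periodicity, and since Kwong's theorem in fact gives pure periodicity the discrepancy is harmless, but a clean write-up should note it.
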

\begin{proof}
Let $m\geqslant2$ be a fixed positive integer. We consider two cases.

First, we assume that $k=1$. Then
\[
    p_\mathcal{A}(n,1) = \begin{cases}
        0, & \text{if } a_1\nmid n\\
        1, & \text{if } a_1\mid n.
        \end{cases}
  \]
and we obtain periodicity of the function $p_\mathcal{A}(n,1) \pmod*{m}$ on $n$ with the period $T=a_1.$ This completes the proof for $k=1$. Now, let $k\geqslant2$. We assume that $m=p_1^{\alpha_1}\ldots p_l^{\alpha_l}$ is a prime factorization of $m$ and we represent parts in the form
\begin{align*}
    &a_1=p_1^{\beta_{1,1}}\ldots p_l^{\beta_{1,l}}p_{l+1}^{\beta_{1,l+1}}\ldots p_{l+s}^{\beta_{1,l+s}} \\
    &a_2=p_1^{\beta_{2,1}}\ldots p_l^{\beta_{2,l}}p_{l+1}^{\beta_{2,l+1}}\ldots p_{l+s}^{\beta_{2,l+s}} \\
    &\vdots \\
    &a_{k-1}=p_1^{\beta_{k-1,1}}\ldots p_l^{\beta_{k-1,l}}p_{l+1}^{\beta_{k-1,l+1}}\ldots p_{l+s}^{\beta_{k-1,l+s}} \\
    &a_{k}=p_1^{\beta_{k,1}}\ldots p_l^{\beta_{k,l}}p_{l+1}^{\beta_{k,l+1}}\ldots p_{l+s}^{\beta_{k,l+s}}, 
\end{align*}
where $p_j$ are all possible distinct prime divisors of $m,a_1,\ldots,a_k$, and $\beta_{i,j}\geqslant0$ for any $1\leqslant i\leqslant k$ and $1\leqslant j\leqslant l+s$. Due to equation $(3.3)$ in Theorem $3.2$, we concentrate on periodicity modulo $p_t^{\alpha_t}$, where $1\leqslant t\leqslant l$. Let $\gamma_j=\max\{\beta_{i,j}:1\leqslant i\leqslant k\}$. Then the $p_t$-free part of $\lcm\{a_i: i\in\{1,\ldots,k\}\}$ can be expressed as
\begin{align*}
    L_t=\frac{\prod_{i=1}^{l+s}p_i^{\gamma_i}}{p_t^{\gamma_t}}.
\end{align*}
Next, let $b_t$ be the smallest natural number such that
\begin{align*}
    p_t^{b_t}\geqslant\sum_{i=1}^kp_t^{\beta_{i,t}}.
\end{align*}
The right side of the above inequality can be bounded from above in the following way
\begin{align*}
    \sum_{i=1}^kp_t^{\beta_{i,t}}\leqslant \sum_{i=1}^kp_t^{\gamma_{t}}=kp_t^{\gamma_{t}}\leqslant p_t^{k-1}p_t^{\gamma_{t}}=p_t^{k-1+\gamma_{t}}.
\end{align*}
Hence, $b_t\leqslant k-1+\gamma_{t}$ and the fundamental period modulo $p_t^{\alpha_t}$ satisfies
\begin{align*}
    p_t^{\alpha_t+b_t-1}L_t\mid p_t^{\alpha_t+k-2+\gamma_{t}}L_t.
\end{align*}
Thus, the sequence $p_\mathcal{A}(n,k)\pmod*{p_t^{\alpha_t}}$ is periodic in $n$ with the period $p_t^{\alpha_t+k-2+\gamma_{t}}L_t$. Moreover, Theorem $3.2$ and our prior consideration maintain that
\begin{align*}
    T_m&=\lcm\{p_t^{\alpha_t+b_t-1}L_t: 1\leqslant t\leqslant l\}\mid\lcm\{p_t^{\alpha_t+k-2+\gamma_{t}}L_t: 1\leqslant t \leqslant n\}\\
    &=\prod_{j=1}^lp_j^{\alpha_j+k-2}\prod_{j=1}^{l+s}p_j^{\gamma_{j}}.
\end{align*}
In particular, we have $\prod_{j=1}^{l+s}p_j^{\gamma_{j}}=\lcm\{a_1,\ldots,a_k\}.$ Hence, to complete the proof, we just need to show that 
$\prod_{j=1}^lp_j^{\alpha_j+k-2}\mid \prod_{j=1}^lp_j^{\alpha_j(k-1)}=m^{k-1}.$
However, it can be easily verified that the inequality $\alpha_j+k-2\leqslant\alpha_j(k-1)$ holds for each $j\in\{1,\ldots,l\}$ and $k\geqslant2$, as desired.
\end{proof}

\section{Upper bound for the density of $\{n\in\mathbb{N}: p_\mathcal{A}(n,k)\equiv1 \pmod*{2}\}$}

In this section, we will take advantage of Lemma $3.1$ to find a special relation between certain densities, which gives us the ability to determine the upper bound for odd density of $p_\mathcal{A}(n,k)$. 
%The upcoming statement expresses the previously mentioned connection. 
\begin{lm}
If $k\in\mathbb{N}_{\geqslant2}$, then the following equality holds
\begin{align*}
    \lim_{N\to\infty}&\frac{\#\{n\leqslant N: p_\mathcal{A}(n,k) \equiv p_\mathcal{A}(n,k-1)\equiv 1 \pmod*{2} \}}{N}\\
    &=\lim_{N\to\infty}\frac{\#\{n\leqslant N: p_\mathcal{A}(n,k) \not\equiv p_\mathcal{A}(n,k-1)\equiv 1 \pmod*{2} \}}{N}.
\end{align*}
\end{lm}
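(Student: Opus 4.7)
The plan is to use the recurrence from Lemma 3.1 reduced modulo $2$ to translate the condition on $p_\mathcal{A}(n,k-1)$ into a condition on consecutive values of $p_\mathcal{A}(\cdot,k)$ shifted by $a_k$, and then exploit the periodicity of $p_\mathcal{A}(n,k)\pmod 2$ guaranteed by Corollary 3.4.

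First I would observe that by Lemma 3.1 taken modulo $2$,
\begin{equation*}
p_\mathcal{A}(n,k-1)\equiv p_\mathcal{A}(n,k)+p_\mathcal{A}(n-a_k,k)\pmod 2.
\end{equation*}
Therefore $p_\mathcal{A}(n,k-1)\equiv 1\pmod 2$ is equivalent to $p_\mathcal{A}(n,k)\not\equiv p_\mathcal{A}(n-a_k,k)\pmod 2$. Under this translation, the two sets in the statement become
\begin{align*}
A&=\{n:\, p_\mathcal{A}(n,k)\equiv 1,\ p_\mathcal{A}(n-a_k,k)\equiv 0\pmod 2\},\\
B&=\{n:\, p_\mathcal{A}(n,k)\equiv 0,\ p_\mathcal{A}(n-a_k,k)\equiv 1\pmod 2\}.
\end{align*}

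Next, by Corollary 3.4 the sequence $(p_\mathcal{A}(n,k)\pmod 2)_{n\in\N}$ is (eventually) periodic with some period $T$; the shifted sequence $(p_\mathcal{A}(n-a_k,k)\pmod 2)_{n\geqslant a_k}$ is then periodic with the same $T$, and the joint sequence
\begin{equation*}
\bigl(p_\mathcal{A}(n,k)\pmod 2,\,p_\mathcal{A}(n-a_k,k)\pmod 2\bigr)
\end{equation*}
is periodic in $n$ with period $T$. Denote by $x_{ij}$ the number of $n$ in one full period for which this joint residue equals $(i,j)$. The key observation is the marginal identity
\begin{equation*}
x_{10}+x_{11}=\#\{n<T:\, p_\mathcal{A}(n,k)\equiv 1\}=\#\{n<T:\, p_\mathcal{A}(n-a_k,k)\equiv 1\}=x_{01}+x_{11},
\end{equation*}
which is just shift-invariance of counting over a full period; this forces $x_{10}=x_{01}$.

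Finally, standard periodicity arguments yield $d(A)=x_{10}/T$ and $d(B)=x_{01}/T$, hence $d(A)=d(B)$, which is the claimed equality. The only step requiring any care is justifying that counting over a full period gives the correct density (standard, since $\#\{n\leqslant N:\ldots\}=\lfloor N/T\rfloor\cdot x_{ij}+O(1)$), and ensuring the periodicity result of Corollary 3.4 is applied to the shifted sequence as well; neither is a real obstacle.
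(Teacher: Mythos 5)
Your proof is correct, but it is organized differently from the paper's. The paper never forms the joint sequence $\bigl(p_\mathcal{A}(n,k),\,p_\mathcal{A}(n-a_k,k)\bigr)\pmod 2$; instead it computes the odd density of $p_\mathcal{A}(n,k)$ in two ways --- once by splitting $\{n: p_\mathcal{A}(n,k)\equiv 1\}$ according to the parity of $p_\mathcal{A}(n,k-1)$, and once by using the recurrence of Lemma 3.1 together with the shift $n\mapsto n+a_k$ to identify that density with the density of $\{n: p_\mathcal{A}(n,k)\not\equiv p_\mathcal{A}(n,k-1)\pmod 2\}$ --- and then cancels the common term $d(\{n: p_\mathcal{A}(n,k)\not\equiv p_\mathcal{A}(n,k-1)\equiv 0\})$ from the two expressions. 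You instead eliminate $p_\mathcal{A}(n,k-1)$ entirely via $p_\mathcal{A}(n,k-1)\equiv p_\mathcal{A}(n,k)+p_\mathcal{A}(n-a_k,k)\pmod 2$, reducing the claim to the symmetric statement $x_{10}=x_{01}$ about a periodic sequence and its translate, which you prove by comparing marginal counts over a full period. Both arguments rest on the same two ingredients (the recurrence mod $2$ and invariance of density under a fixed shift), but your version makes the underlying symmetry explicit and isolates exactly where periodicity is used (namely, to guarantee that the limits exist and equal the per-period frequencies); the paper's version is shorter and needs only shift-invariance of density, not full periodicity. One cosmetic remark: your translation of the two sets into $A$ and $B$ silently uses that ``$p_\mathcal{A}(n,k)\equiv p_\mathcal{A}(n,k-1)\equiv 1$ and $p_\mathcal{A}(n,k)\not\equiv p_\mathcal{A}(n-a_k,k)$'' forces $p_\mathcal{A}(n-a_k,k)\equiv 0$, and similarly for $B$; this is immediate but worth a line in a written-up version.
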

\begin{proof}
Fix $k\in\mathbb{N}_{\geqslant2}$. On the one hand, we can easily notice that
\begin{align*}
    \lim_{N\to\infty}&\frac{\#\{n\leqslant N: p_\mathcal{A}(n,k) \equiv 1 \pmod*{2}\}}{N}\\
    &=\lim_{N\to\infty}\frac{\#\{n\leqslant N: p_\mathcal{A}(n,k) \equiv p_\mathcal{A}(n,k-1)\equiv 1 \pmod*{2} \}}{N}\\
    &+\lim_{N\to\infty}\frac{\#\{n\leqslant N: p_\mathcal{A}(n,k) \not\equiv p_\mathcal{A}(n,k-1)\equiv 0 \pmod*{2} \}}{N}.
\end{align*}
On the other hand, applying $(3.1)$, we get
\begin{align*}
    \lim_{N\to\infty}&\frac{\#\{n\leqslant N: p_\mathcal{A}(n,k) \equiv 1 \pmod*{2}\}}{N}\\
    &=\lim_{N\to\infty}\frac{\#\{n\leqslant N: p_\mathcal{A}(n+a_k,k)-p_\mathcal{A}(n+a_k,k-1)\equiv 1 \pmod*{2} \}}{N}.
\end{align*}
We can replace terms of $n+a_k$ with $n$, since we consider the limit as $N\to\infty.$ Therefore, we obtain
\begin{align*}
    \lim_{N\to\infty}&\frac{\#\{n\leqslant N: p_\mathcal{A}(n,k) \equiv 1 \pmod*{2}\}}{N}\\
    &=\lim_{N\to\infty}\frac{\#\{n\leqslant N: p_\mathcal{A}(n,k)-p_\mathcal{A}(n,k-1)\equiv 1 \pmod*{2} \}}{N}\\
    &=\lim_{N\to\infty}\frac{\#\{n\leqslant N: p_\mathcal{A}(n,k)\not \equiv p_\mathcal{A}(n,k-1)\equiv 1 \pmod*{2} \}}{N}\\
    &+\lim_{N\to\infty}\frac{\#\{n\leqslant N: p_\mathcal{A}(n,k)\not \equiv p_\mathcal{A}(n,k-1)\equiv 0 \pmod*{2} \}}{N}
\end{align*}
Finally, if we compare the last equality with the first in the proof, we conclude the required result. 
\end{proof}
The next result generalizes findings of Karhadkar in \cite{K}, namely, the case of $\mathcal{A}=(n)_{n\in\mathbb{N}_+}$,  to any sequence $\mathcal{A}$ of positive integers.  
\begin{tw}
The inequality
\begin{align*}
    \lim_{N\to\infty}&\frac{\#\{n\leqslant N: p_\mathcal{A}(n,k) \equiv 1 \pmod*{2}\}}{N}\leqslant\frac{2}{3}
\end{align*}
holds for infinitely many positive integers $k$. More precisely, if the above inequality is not satisfied for some positive integer $k$, then it holds for $k+1$.
\end{tw}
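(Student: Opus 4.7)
The plan is to convert Lemma $4.1$ into a recursive inequality linking the odd densities $d_k := \lim_{N\to\infty}\frac{\#\{n\leqslant N: p_\mathcal{A}(n,k)\equiv 1\pmod*{2}\}}{N}$ of $p_\mathcal{A}(n,k)$ at consecutive values of $k$. The limit defining $d_k$ genuinely exists for every $k$, since by Corollary $3.4$ the sequence $(p_\mathcal{A}(n,k)\pmod*{2})_{n\in\N}$ is (eventually) periodic; the same observation guarantees existence of the density of any set cut out by a joint parity condition on $p_\mathcal{A}(n,k)$ and $p_\mathcal{A}(n,k-1)$.

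Fix $k\geqslant 2$ and let $\alpha,\beta,\gamma,\delta$ denote the densities of the sets of $n$ on which the pair $(p_\mathcal{A}(n,k),p_\mathcal{A}(n,k-1))\pmod*{2}$ equals $(1,1)$, $(1,0)$, $(0,1)$, $(0,0)$, respectively. These four densities sum to $1$, and one reads off
$$d_k = \alpha+\beta,\qquad d_{k-1}=\alpha+\gamma.$$
Lemma $4.1$ asserts precisely $\alpha=\gamma$, so $\alpha = d_{k-1}/2$. Combining this with the trivial bound $\beta\leqslant 1-d_{k-1}$, which holds because the set counted by $\beta$ is contained in $\{n: p_\mathcal{A}(n,k-1)\equiv 0\pmod*{2}\}$, yields
$$d_k \;\leqslant\; \frac{d_{k-1}}{2}+\bigl(1-d_{k-1}\bigr) \;=\; 1-\frac{d_{k-1}}{2}.$$

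Shifting indices, this reads $d_{k+1}\leqslant 1-d_k/2$. Therefore, should the desired bound $d_k\leqslant 2/3$ fail at some $k$, i.e.\ $d_k>2/3$, one would get $d_{k+1}<1-\frac{1}{3}=\frac{2}{3}$, which is exactly the second (more precise) statement of the theorem. The first assertion is then immediate: the set $\{k\in\N_+:d_k\leqslant 2/3\}$ cannot be cofinite in $\N_+$, hence must be infinite. I do not foresee a substantive obstacle in this argument; the only point truly deserving care is verifying that each density entering the computation is actually a well-defined limit, and that is precisely what Corollary $3.4$ supplies.
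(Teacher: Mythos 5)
Your proof is correct and follows essentially the same route as the paper: both arguments reduce to the inequality $d_{k+1}\leqslant 1-d_k/2$, obtained by splitting the set $\{n: p_\mathcal{A}(n,k)\equiv 1\pmod*{2}\}$ according to the parity of $p_\mathcal{A}(n,k+1)$ and invoking Lemma $4.1$ to equate the two halves. (One cosmetic slip in the last step: what you need is that the complement $\{k: d_k>2/3\}$ contains no two consecutive integers, hence $\{k: d_k\leqslant 2/3\}$ is infinite --- ``cannot be cofinite'' does not by itself imply ``infinite''.)
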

\begin{proof}
Let us assume that the odd density of $p_\mathcal{A}(n,k)$ is grater than $\frac{2}{3}$, that is
\begin{align*}
    \lim_{N\to\infty}&\frac{\#\{n\leqslant N: p_\mathcal{A}(n,k) \equiv 1 \pmod*{2}\}}{N}>\frac{2}{3}.
\end{align*}
By Lemma $4.1$, we can rewrite the left-sided term as
\begin{align*}
    \lim_{N\to\infty}&\frac{\#\{n\leqslant N: p_\mathcal{A}(n,k) \equiv 1 \pmod*{2}\}}{N}\\
    &=\lim_{N\to\infty}\frac{\#\{n\leqslant N: p_\mathcal{A}(n,k) \not \equiv p_\mathcal{A}(n,k+1)\equiv 0 \pmod*{2}\}}{N}\\
    &+\lim_{N\to\infty}\frac{\#\{n\leqslant N: p_\mathcal{A}(n,k) \equiv p_\mathcal{A}(n,k+1)\equiv 1 \pmod*{2}\}}{N}\\
    &=2\lim_{N\to\infty}\frac{\#\{n\leqslant N: p_\mathcal{A}(n,k+1) \not\equiv p_\mathcal{A}(n,k)\equiv 1 \pmod*{2} \}}{N}.
\end{align*}
Hence, we observe that
\begin{align*}
    \lim_{N\to\infty}\frac{\#\{n\leqslant N: p_\mathcal{A}(n,k+1) \not\equiv p_\mathcal{A}(n,k)\equiv 1 \pmod*{2} \}}{N}>\frac{1}{3},
\end{align*}
and, in particular, 
\begin{align*}
    \lim_{N\to\infty}\frac{\#\{n\leqslant N: p_\mathcal{A}(n,k+1) \equiv 0 \pmod*{2} \}}{N}>\frac{1}{3}.
\end{align*}
Therefore, we conclude that the following inequality holds
\begin{align*}
    \lim_{N\to\infty}\frac{\#\{n\leqslant N: p_\mathcal{A}(n,k+1) \equiv 1 \pmod*{2} \}}{N}\leqslant\frac{2}{3}
\end{align*}
and thereby there exist infinitely many natural numbers $k$ such that the odd density of $p_\mathcal{A}(n,k)$ is not grater than $\frac{2}{3}$, which was to prove.
\end{proof}
As we will see in the sequel, the inequality in Theorem $4.2$ is not optimal. The reason is simple: it does not depend on the elements of the sequence $\mathcal{A}$. On the other hand, we will find lower bound for the odd density of $p_\mathcal{A}(n,k)$.

\section{Lower bound for the density of $\{n\in\mathbb{N}: p_\mathcal{A}(n,k)\not\equiv0 \pmod*{m}\}$}
In spite of consideration in Sec. $4$, we will generalize our reasonings to any positive integer $m>1$. Theorems $3.2$ and $3.3$ provide that $p_\mathcal{A}(n,k)\pmod*{m}$ is periodic on $n$ for every positive integer $k\geqslant1$. Thus, we can represent its generating function in the form
\begin{align*}
    \sum_{n=0}^\infty p_\mathcal{A}(n,k)x^n=\prod_{i=1}^k\frac{1}{1-x^{a_i}}\equiv \frac{a(x)}{1-x^T} \pmod*{m},
\end{align*}
where $T$ is the fundamental period of $p_\mathcal{A}(n,k) \pmod*{m}$ and $\deg(a)<T$. Multiplying both sides of the above congruence by the denominators, we get
\begin{align*}
    a(x)\prod_{i=1}^k(1-x^{a_i})\equiv1-x^T\pmod*{m},
\end{align*}
which implies that 
$$\deg(a)=T-\sum_{i=1}^ka_i.$$
It means that the last $\sum_{i=0}^ka_i-1$ terms of a period of $p_\mathcal{A}(n,k)$ must be zero modulo $m$. Hence, for each positive integer $k$, we have $\sum_{i=0}^ka_i-1$ consecutive values of $n$, which are divisible by $m$. Moreover, we get even stronger consequence of this fact.
\begin{tw}
For any $k\in\mathbb{N_+}$ and $m\in\mathbb{N}_{\geqslant2}$, there exist at most $\sum_{i=1}^ka_i-1$ consecutive values of $n$ such that $p_\mathcal{A}(n,k)\equiv0\pmod*{m}$.
\end{tw}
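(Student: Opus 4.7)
The plan is to prove the contrapositive: assuming there is a run of $N := \sum_{i=1}^k a_i$ consecutive values $n, n+1, \ldots, n+N-1$ with $p_\mathcal{A}(j,k) \equiv 0 \pmod*{m}$, I would derive that \emph{every} term of $(p_\mathcal{A}(n,k) \pmod*{m})_{n \in \N}$ vanishes modulo $m$, contradicting $p_\mathcal{A}(0,k) = 1$ together with the standing hypothesis $m \geq 2$.

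First I would set up a convenient linear recurrence for $p_\mathcal{A}(\cdot,k)$. Clearing denominators in the generating function $(3.2)$ yields $\prod_{i=1}^k (1 - x^{a_i}) \cdot \sum_{n=0}^\infty p_\mathcal{A}(n,k)\, x^n = 1$ in $\mathbb{Z}[[x]]$. Expanding $\prod_{i=1}^k(1 - x^{a_i}) = \sum_{S \subseteq \{1,\ldots,k\}} (-1)^{|S|} x^{\sum_{i \in S} a_i}$ by inclusion--exclusion and comparing coefficients of $x^M$, I would obtain the identity
$$\sum_{S \subseteq \{1,\ldots,k\}} (-1)^{|S|}\, p_\mathcal{A}\!\left(M - \sum_{i \in S} a_i,\, k\right) = 0 \qquad \text{for every } M \geq 1.$$
The structural feature I will exploit is that for $S \neq \emptyset$ the subset sum $\sum_{i \in S} a_i$ lies in $\{1, 2, \ldots, N\}$.

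Next I would propagate the run of zeros forward by induction on $M \geq n$, proving $p_\mathcal{A}(M, k) \equiv 0 \pmod*{m}$. The base $M \in \{n, \ldots, n+N-1\}$ is the hypothesis. For $M \geq n + N$, I isolate the $S = \emptyset$ term in the recurrence modulo $m$:
$$p_\mathcal{A}(M, k) \equiv -\sum_{\emptyset \neq S \subseteq \{1,\ldots,k\}} (-1)^{|S|}\, p_\mathcal{A}\!\left(M - \sum_{i \in S} a_i,\, k\right) \pmod*{m}.$$
Each argument on the right-hand side lies in $\{M-N, M-N+1, \ldots, M-1\} \subseteq \{n, n+1, \ldots, M-1\}$, so by the base case or the inductive hypothesis it is $\equiv 0 \pmod*{m}$; hence $p_\mathcal{A}(M, k) \equiv 0 \pmod*{m}$.

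To close the argument I would invoke pure periodicity from Theorem $3.3$ (or, more cheaply, Corollary $3.4$): the sequence $(p_\mathcal{A}(n,k) \pmod*{m})_{n\in\N}$ is purely periodic with some period $T$. Picking $Q$ large enough that $QT \geq n$, pure periodicity combined with the forward propagation step yields
$$1 = p_\mathcal{A}(0,k) \equiv p_\mathcal{A}(QT, k) \equiv 0 \pmod*{m},$$
which is impossible since $m \geq 2$. The only delicate point in the whole proof is the index bookkeeping inside the induction, and that bookkeeping is precisely what pins $N = \sum_{i=1}^k a_i$ down as the sharp threshold: the shifts $M - \sum_{i\in S} a_i$ with $S \neq \emptyset$ must all fall inside the already-verified block of zeros, which forces that block to have length at least $N$ for the contradiction to trigger.
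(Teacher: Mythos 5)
Your proof is correct, but it takes a genuinely different route from the paper's. The paper argues by induction on the number of parts $k$: it feeds a hypothetical run of $\sum_{j=1}^{l}a_j-1$ zeros into the two-term recurrence $p_\mathcal{A}(n,l)=p_\mathcal{A}(n-a_l,l)+p_\mathcal{A}(n,l-1)$ of Lemma 3.1, extracts from it a run of $\sum_{j=1}^{l-1}a_j-1$ consecutive zeros of $p_\mathcal{A}(\cdot,l-1)$, and then uses the inductive hypothesis for $l-1$ parts to show that the next term $p_\mathcal{A}(i+\sum_{j=1}^{l}a_j-1,l)$ is nonzero modulo $m$; no periodicity is needed. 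You instead keep $k$ fixed, use the full $2^k$-term inclusion--exclusion recurrence coming from $\prod_{i=1}^k(1-x^{a_i})$, propagate a window of $N=\sum_{i=1}^k a_i$ zeros forward to all larger indices, and invoke pure periodicity to pull the contradiction back to $p_\mathcal{A}(0,k)=1$. Your version is arguably more transparent about why the threshold is exactly $N$, but it imports Kwong's theorem; note that you genuinely need \emph{pure} periodicity to equate $p_\mathcal{A}(QT,k)$ with $p_\mathcal{A}(0,k)$, and Corollary 3.4 as stated only asserts periodicity, so the correct citation is Theorem 3.3 combined with Theorem 3.2. You could make your argument as self-contained as the paper's by propagating \emph{backward} instead of forward: isolating the $S=\{1,\dots,k\}$ term of your recurrence shows that if $p_\mathcal{A}(j,k)\equiv 0\pmod*{m}$ for all $j\in\{M-N+1,\dots,M\}$ then also $p_\mathcal{A}(M-N,k)\equiv 0\pmod*{m}$, since every other shift $\sum_{i\in S}a_i$ is at most $N-1$; iterating drives the run of zeros down to index $0$ and contradicts $p_\mathcal{A}(0,k)=1$ directly, with no appeal to periodicity.
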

\begin{proof}
We prove the theorem by induction on $k$, for any fixed $m\geqslant2.$ If $k=1$, then 
\[
    p_\mathcal{A}(n,1) = \begin{cases}
        0, & \text{if } a_1\nmid n\\
        1, & \text{if } a_1\mid n,
        \end{cases}
  \]
  so there exist at most $a_1-1$ consecutive values of $n$ such that $p_\mathcal{A}(n,1)\equiv0\pmod*{m}$. This establishes the basis case. Let us assume that the statement holds for $k=l-1$. If
  \begin{align}
      p_\mathcal{A}(i,l)\equiv p_\mathcal{A}(i+1,l)\equiv\ldots\equiv p_\mathcal{A}\left(i+\sum_{j=1}^la_j-2,l\right)\equiv 0\pmod*{m},
  \end{align}
  then
  \begin{align*}
      &p_\mathcal{A}(i+a_l,l)-p_\mathcal{A}(i,l),\\
      &p_\mathcal{A}(i+1+a_l,l)-p_\mathcal{A}(i+1,l),\\
      &\vdots\\
      &p_\mathcal{A}\left(i+\sum_{j=1}^la_j-2,l\right)-p_\mathcal{A}\left(i+\sum_{j=1}^{l-1}a_j-2,l\right)
  \end{align*}
  are all divisible by $m$. Next, Lemma $3.1$ implies that
  \begin{align*}
      p_\mathcal{A}(i+a_l,l-1)&\equiv p_\mathcal{A}(i+1+a_l,l-1)\equiv\ldots \\
      \ldots&\equiv p_\mathcal{A}\left(i+\sum_{j=1}^la_j-2,l-1\right)\equiv 0\pmod*{m}.
  \end{align*}
  However, the sequence consists of $\sum_{j=1}^{l-1}a_j-1$ consecutive numbers, which are divisible by $m$. Therefore, by the induction hypothesis, $p_\mathcal{A}(i+\sum_{j=1}^la_j-1,l-1)\not\equiv0\pmod*{m}$. On the other hand, $(5.1)$ asserts that
  \begin{align*}
      p_\mathcal{A}\left(i+\sum_{j=1}^{l-1}a_j-1,l\right)\equiv0\pmod*{m}.
  \end{align*}
  If we apply Lemma $3.1$ once again, we conclude that
  \begin{align*}
      p_\mathcal{A}\left(i+\sum_{j=1}^{l}a_j-1,l\right)&=p_\mathcal{A}\left(i+\sum_{j=1}^{l-1}a_j-1,l\right)+p_\mathcal{A}\left(i+\sum_{j=1}^{l}a_j-1,l-1\right) \\
      &\not\equiv0\pmod*{m}.
  \end{align*}
  This completes the inductive step and thereby ends the proof. 
\end{proof}
The next theorem directly implies the lower bound for the considered density. 
\begin{tw}
Let $m>1$ be a fixed positive integer. For each positive integer $k$, we have
\begin{equation}
    \lim_{N\to\infty}\frac{\#\{n\leqslant N: p_\mathcal{A}(n,k)\not\equiv 0\pmod*{m}\}}{N}\geqslant\frac{1}{\sum_{i=1}^ka_i}.
\end{equation}
\end{tw}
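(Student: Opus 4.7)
The plan is to derive Theorem 5.2 as an immediate consequence of Theorem 5.1 via a block-counting argument. Theorem 5.1 says that in any stretch of $\sum_{i=1}^k a_i$ consecutive integers, we cannot have every value of $p_\mathcal{A}(\cdot,k)$ vanishing modulo $m$, since this would produce $\sum_{i=1}^k a_i$ consecutive zeros, exceeding the allowed maximum of $\sum_{i=1}^k a_i-1$. Hence each such block contributes at least one index $n$ with $p_\mathcal{A}(n,k)\not\equiv 0\pmod{m}$.

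Concretely, I would set $S:=\sum_{i=1}^k a_i$ and, given $N$, partition the interval $\{1,2,\ldots,N\}$ into consecutive disjoint blocks
\[
B_j=\{(j-1)S+1,(j-1)S+2,\ldots,jS\},\quad j=1,2,\ldots,\lfloor N/S\rfloor,
\]
plus a possible remainder block of size less than $S$. By the contrapositive of Theorem 5.1, each block $B_j$ contains at least one $n$ with $p_\mathcal{A}(n,k)\not\equiv 0\pmod{m}$. Therefore
\[
\#\{n\leqslant N:\ p_\mathcal{A}(n,k)\not\equiv 0\pmod*{m}\}\ \geqslant\ \left\lfloor\frac{N}{S}\right\rfloor.
\]
Dividing by $N$ and letting $N\to\infty$ yields the desired lower bound $1/S=1/\sum_{i=1}^k a_i$. (Formally one should note that the limit on the left actually exists, which follows from Corollary 3.4, since $p_\mathcal{A}(n,k)\pmod{m}$ is ultimately periodic in $n$; alternatively the inequality should be stated with $\liminf$ and the existence of the limit invoked separately.)

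There is essentially no hard step here: once Theorem 5.1 is in hand, the result is a routine pigeonhole/block-counting argument, and the only minor care needed is to discard the leftover tail of length less than $S$ so as not to double-count and to justify that the density limit exists (again via the periodicity furnished by Corollary 3.4). The only genuine obstacle was already dealt with in Theorem 5.1.
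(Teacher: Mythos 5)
Your proposal is correct and is essentially identical to the paper's own proof: both deduce Theorem 5.2 from Theorem 5.1 by observing that every block of $\sum_{i=1}^k a_i$ consecutive indices must contain at least one $n$ with $p_\mathcal{A}(n,k)\not\equiv 0\pmod{m}$ and then counting blocks. Your version is, if anything, slightly more careful in spelling out the block decomposition and the existence of the limit via periodicity.
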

\begin{proof}
Let us fix positive integers $k$ and $m$ with $m>1$. By the previous theorem, we know that there can be at most $\sum_{i=1}^ka_i-1$ consecutive values of $n$ such that $p_\mathcal{A}(n,k)\equiv0\pmod*{m}$. Thus, in any collection of $\sum_{i=1}^ka_i$ consecutive terms of $(p_\mathcal{A}(n,k))_{n\in\N}$, there exist at least one index, say for example $n_k$, such that $p_\mathcal{A}(n_k,k)\not\equiv0\pmod*{m}.$ Therefore,
\begin{align*}
    \lim_{N\to\infty}\frac{\#\{n\leqslant N: p_\mathcal{A}(n,k)\not\equiv 0\pmod*{m}\}}{N}\geqslant\frac{1}{\sum_{i=1}^ka_i}.
\end{align*}
\end{proof}
Clearly, Theorem $5.2$ automatically asserts that for $m=2$ we obtain the lower bound for the odd density of $p_\mathcal{A}(n,k)$. In contrast to Theorem $4.2$, the above inequality depends on the sequence $\mathcal{A}$. However, it is independent of $m$, and consequently is not optimal as well.

\section{Some results on restricted $m$-ary partitions}
Now, we focus on some properties of so called restricted $m$-ary partition function. First, let us introduce a definition which simplify the notation.
\begin{df}
Let $\mathcal{A}$ be a sequence of positive integers. For any arbitrarily fixed $m\in\N_{\geqslant2}$, $k\in\mathbb{N}_+$ and $i\in\{0,1,\ldots,m-1\}$ let $S_\mathcal{A}(i,m,k)$ denote
\begin{align*}
S_\mathcal{A}(i,m,k)=\{n\in\mathbb{N}:p_\mathcal{A}(n,k)\equiv i\pmod*{m}\}.
\end{align*}
\end{df}

In this section we consider the sequence $\mathcal{M}=(m^{n-1})_{n\in\N_+}$ for a given positive integer $m>1$. The generating function for $p_\mathcal{M}(n,k)$ is given by
\begin{align*}
    \sum_{n=0}^\infty p_\mathcal{M}(n,k)x^n=\prod_{i=0}^{k-1}\frac{1}{1-x^{m^i}}.
\end{align*}
From Lemma $3.1$ we immediately obtain a recurrence relation of the form
\begin{align}
    p_\mathcal{M}(n,k)=p_\mathcal{M}(n-m^{k-1},k)+p_\mathcal{M}(n,k-1).
\end{align}

%\begin{pr}
%If $B=\{1,2,2^2,2^3\ldots\}$, then 
%\begin{align}
 %   p_B(n,k)\equiv\begin{cases}
  %      1\pmod*{2}, & \text{if } n\equiv0,1\pmod*{2^{k}}\\
   %     0\pmod*{2}, & \text{otherwise},
%        \end{cases}
%\end{align}
%for $k\geqslant1.$
%\end{pr}
The study of congruence properties of the binary partition function (the case of $m=2$) was initiated in the late 1960's by Churchhouse \cite{CH1,CH2}. Afterwards, among others, Gupta \cite{G1,G2}, Andrews \cite{A1,A2}, R{\o}dseth \cite{RS1,RS2} and Sellers \cite{A2,RS2} widely developed theory related to $m-$ary partitions. Now, we perform an analogous fact to the theorem obtained in \cite{A2}, for the restricted partition function $p_\mathcal{M}(n,k)$.
\begin{tw}
Let $m>1$, $k$ and $n$ be fixed positive integers. If we represent $n$ in base $m$, that is
\begin{equation}
    n=c_Nm^N+c_{N-1}m^{N-1}+\ldots+c_km^k+c_{k-1}m^{k-1}+\ldots+c_1m+c_0,
\end{equation}
and we put $\mathcal{M}=(m^{i-1})_{i\in\N_+}$, then 
\begin{align}
    p_\mathcal{M}(n,k)\equiv(c_1+1)(c_2+1)\ldots(c_{k-1}+1)\pmod*{m}.
\end{align}
\end{tw}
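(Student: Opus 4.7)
The plan is to proceed by double induction: an outer induction on $k$, and for each fixed $k$, a secondary induction on $n$. The driving tool is the recurrence $p_\mathcal{M}(n,k)=p_\mathcal{M}(n-m^{k-1},k)+p_\mathcal{M}(n,k-1)$ from $(6.1)$, together with Lemma $3.1$.

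\textbf{Base cases.} For $k=1$ the claimed product on the right of $(6.3)$ is empty, hence equals $1$, while $p_\mathcal{M}(n,1)=1$ for every $n\in\mathbb{N}$ because $m^0=1$ divides everything. For $n=0$ (with arbitrary $k$) all base-$m$ digits vanish, both sides equal $1$, so the inner induction starts correctly.

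\textbf{Inductive step.} Fix $k\geqslant 2$, assume $(6.3)$ holds for $k-1$ and for every value of $n$, and assume it holds at level $k$ for all $n'<n$. Write $n=c_Nm^N+\dots+c_0$ in base $m$ as in $(6.2)$. I would split into three cases according to the digit $c_{k-1}$ and the size of $n$. If $n<m^{k-1}$, then $c_{k-1}=0$ and $p_\mathcal{M}(n-m^{k-1},k)=0$, so the recurrence reduces to $p_\mathcal{M}(n,k)=p_\mathcal{M}(n,k-1)$; the outer induction gives $\prod_{j=1}^{k-2}(c_j+1)$, and since $c_{k-1}+1=1$ this matches the right-hand side of $(6.3)$. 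If $c_{k-1}\geqslant 1$ (and hence $n\geqslant m^{k-1}$), subtracting $m^{k-1}$ simply replaces the digit $c_{k-1}$ by $c_{k-1}-1$ and leaves all other digits unchanged, so the inner induction yields $p_\mathcal{M}(n-m^{k-1},k)\equiv c_{k-1}\prod_{j=1}^{k-2}(c_j+1)\pmod{m}$, the outer induction yields $p_\mathcal{M}(n,k-1)\equiv\prod_{j=1}^{k-2}(c_j+1)\pmod{m}$, and their sum is exactly $(c_{k-1}+1)\prod_{j=1}^{k-2}(c_j+1)$, as required.

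\textbf{The delicate case.} The one step that is not routine is $c_{k-1}=0$ with $n\geqslant m^{k-1}$, because subtracting $m^{k-1}$ now triggers borrowing in base $m$. Let $s\geqslant k$ be the smallest index with $c_s\neq 0$; then
\begin{equation*}
n-m^{k-1}=\sum_{j>s}c_jm^j+(c_s-1)m^s+(m-1)\sum_{j=k-1}^{s-1}m^j+\sum_{j=0}^{k-2}c_jm^j,
\end{equation*}
using the identity $m^s-m^{k-1}=(m-1)(m^{s-1}+\dots+m^{k-1})$. In particular the base-$m$ digit of $n-m^{k-1}$ at position $k-1$ equals $m-1$. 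By the inner induction, $p_\mathcal{M}(n-m^{k-1},k)$ is congruent modulo $m$ to the corresponding product, which contains the factor $(m-1)+1=m\equiv 0\pmod{m}$. Hence $p_\mathcal{M}(n-m^{k-1},k)\equiv 0\pmod{m}$, and the recurrence together with the outer induction yields $p_\mathcal{M}(n,k)\equiv p_\mathcal{M}(n,k-1)\equiv\prod_{j=1}^{k-2}(c_j+1)\equiv(c_{k-1}+1)\prod_{j=1}^{k-2}(c_j+1)\pmod{m}$, completing the induction. The main obstacle is precisely this borrowing case; the trick is noticing that borrowing always produces a digit equal to $m-1$ in position $k-1$, which kills the product modulo $m$.
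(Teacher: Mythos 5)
Your proof is correct, but it takes a genuinely different route from the paper's. The paper also inducts on $k$, but it first invokes Kwong's periodicity theorem (Theorem 3.3) to conclude that $(p_\mathcal{M}(n,k)\bmod m)_{n\in\N}$ has period $m^k$, which reduces the verification to $0\leqslant n\leqslant m^k-1$; for such $n$ it unrolls the recurrence $(6.1)$ exactly $c_{k-1}$ times, each subtraction of $m^{k-1}$ only decrementing the top digit, and the borrowing phenomenon never arises. You instead run a second (strong) induction on $n$ at each level $k$ and confront the borrowing head-on: your observation that subtracting $m^{k-1}$ from an $n$ with $c_{k-1}=0$ and $n\geqslant m^{k-1}$ forces the digit in position $k-1$ to become $m-1$, so that the resulting product acquires the factor $m\equiv 0\pmod m$, is exactly the right way to close that case, and your verification that such an $s\geqslant k$ with $c_s\neq 0$ must exist is implicit but easy (otherwise $n<m^{k-1}$). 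The trade-off is clear: the paper's argument is shorter but leans on an external periodicity result (and, strictly speaking, on the fact that the right-hand side of $(6.3)$ is itself $m^k$-periodic in $n$, so that checking one period suffices); yours is longer by one case but entirely self-contained and proves the statement for all $n$ directly, which in fact gives an independent proof that $m^k$ is a period of $p_\mathcal{M}(n,k)\bmod m$ rather than assuming it.
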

%In the late 1960's, Churchhouse \cite{CH1,CH2} initiated The binary partition function (the case of $m=2$) was originally investigated by Churchhouse in \cite{CH1,CH2}. Afterthat successively 
%But, we may use a bit different kind of formula for $p_B(n,k)$. Let us transform the equation $(6.1)$ as follows
%\begin{align*}
   % (1-x)\sum_{n=0}^\infty p_B(n,k)x^n&=\prod_{i=1}^{k-1}\frac{1}{1-x^{2^i}}\\
    %\sum_{n=0}^\infty p_B(n,k)x^n-\sum_{n=0}^\infty p_B(n,k)x^{n+1}&=\prod_{i=0}^{k-2}\frac{1}{1-(x^2)^{2^i}}.
    %\end{align*}
    %If we rearrange the second sum on the left hand side and use both the assumption that $p_B(n,k)=0$ for all $n<0$ and $k\in\mathbb{N_+}$, and the formula $(6.1)$ once again, we get
%\begin{align*}
 %   \sum_{n=0}^\infty (p_B(n,k)-p_B(n-1,k))x^n=\sum_{n=0}^\infty p_B(n,k-1)x^{2n}.
%\end{align*}
%Therefore, the recurrence relation for $p_B(n,k)$ can be rewritten as
%\begin{align}
 %   p_B(n,k) = \begin{cases}
  %      p_B(n-1,k), & \text{if } 2\nmid n\\
   %     p_B(n-1,k)+p_B(n/2,k-1), & \text{if } 2\mid n.
    %    \end{cases}
%\end{align}
%Now, it is high time to finally determine the odd density of $p_B(n,k)$ for an arbitrary fixed $k\geqslant1$. We can simply deduce this value from the following proposition.
%\begin{pr}
%If $B=\{1,2,2^2,2^3\ldots\}$, then 
%\begin{align}
  %  p_B(n,k)\equiv\begin{cases}
  %      1\pmod*{2}, & \text{if } n\equiv0,1\pmod*{2^{k}}\\
  %      0\pmod*{2}, & \text{otherwise},
 %       \end{cases}
%\end{align}
%for $k\geqslant1.$
%\end{pr}
\begin{proof}
We perform induction on $k$ to prove the statement. Clearly, $p_{\mathcal{M}}(n,1)=1$ for all $n\in\N_+.$ This establishes the basic case. Further, let us assume that $(6.3)$ holds for $j=k-1\geqslant1$, and check the truth of the statement for $j=k$. Theorem $3.3$ provides that the fundamental period of the sequence $(p_\mathcal{M}(n,k)\pmod*{m})_{n\in\N}$ is $m^k$. Thus, we only need to examine $p_\mathcal{M}(n,k)\pmod*{m}$ for all $0\leqslant n\leqslant m^k-1.$ Therefore, we arbitrarily fix $n$ between $0$ and $m^{k-1}$, and present it in base $m$:
\begin{align*}
    n=c_{k-1}m^{k-1}+c_{k-2}m^{k-2}+\ldots+c_1m+c_0,
\end{align*}
i.e., $c_j\in\{0,1,\ldots,m-1\}$ for $j\in\{0,1,\ldots,k-1\}$. Now, if we apply the recurrence equation $(6.1)$ $c_{k-1}$ times, then we get

\begin{align*}
    p_\mathcal{M}(n,k)&=p_\mathcal{M}(n-m^{k-1},k)+p_\mathcal{M}(n,k-1)\\
    &=p_\mathcal{M}(n-2m^{k-1},k)+p_\mathcal{M}(n-m^{k-1},k-1)+p_\mathcal{M}(n,k-1)\\
    &=\ldots=p_\mathcal{M}(n-c_{k-1}m^{k-1},k)+p_\mathcal{M}(n-(c_{k-1}-1)m^{k-1},k-1)\\
    &\hspace{1.3cm}+\ldots+p_\mathcal{M}(n-m^{k-1},k-1)+p_\mathcal{M}(n,k-1).
\end{align*}
It is easy to verify that $p_\mathcal{M}(n-c_{k-1}m^{k-1},k)=p_\mathcal{M}(n-c_{k-1}m^{k-1},k-1)$, since $n-c_{k-1}m^{k-1}<m^{k-1}$. Thus, by both these fact and the induction hypothesis, we deduce
\begin{align*}
    p_\mathcal{M}(n,k)=&p_\mathcal{M}(n-c_{k-1}m^{k-1},k-1)+p_\mathcal{M}(n-(c_{k-1}-1)m^{k-1},k-1)\\
    &+\ldots+p_\mathcal{M}(n-m^{k-1},k-1)+p_\mathcal{M}(n,k-1).\\
    \equiv&(c_{k-1}+1)\prod_{i=1}^{k-2}(c_i+1)\equiv\prod_{i=1}^{k-1}(c_i+1)\pmod*{m}.
\end{align*}
This completes the inductive step and the proof.
%Additionally, as a consequence of Theorem $3.3$ we know that $p_B(n,2)$ is periodic on $n$ with the fundamental period $2^2$. Therefore, this establishes the basic case. Let us assume that $(6.3)$ holds for $j=k-1\geqslant2$. We immediately observe that $p_B(n,k)=p_B(n,k-1)$ for all $0\leqslant n \leqslant 2^{k-1}-1$. Once again, Theorem $3.3$ asserts that the fundamental period of the sequence $(p_B(n,k)\pmod*{2})_{n\in\N}$ is $2^k$. Thus, we only need to check that $p_B(n,k)\equiv0\pmod*{2}$ for all $2^{k-1}\leqslant n \leqslant2^k-1.$ Let us suppose, for the sake of contradiction, that $0\leqslant s\leqslant2^{k-1}-1$ is the smallest number such that
%\begin{align}
 %   p_B(2^{k-1}+s,k)\equiv1\pmod*{2}.
%\end{align}
%We have two alternatives. If $2\nmid s$, then the equation $(6.2)$ implies that $p_B(2^{k-1}+s-1,k)=p_B(2^{k-1}+s,k)\equiv1\pmod*{2}$. However, it contradicts that $s$ is the smallest number such that $(6.4)$ holds. If $2\mid s$, then $(6.2)$ claims that $$p_B(2^{k-1}+s,k)=p_B(2^{k-1}+s-1,k)+p_B(2^{k-2}+s/2,k-1).$$
%On the one hand, $p_B(2^{k-1}+s-1,k)$ is even, since $s$ is the least number such that $(6.4)$ is satisfied. Hence, $p_B(2^{k-2}+s/2,k-1)$ must be odd. On the other hand, we have $2\leqslant2^{k-2}+s/2<2^{k-1}$, so by the induction hypothesis $p_B(2^{k-2}+s/2,k-1)$ is even, and we obtain a contradiction. Therefore, $p_B(n,k)\equiv0\pmod*{2}$ for all $2^{k-1}\leqslant n\leqslant2^k-1$, which completes both the inductive step and the proof. 
\end{proof}
There are many consequences of this result. For instance, if we assume that $m$ is a prime number and $k$ is fixed positive integer, then we may easily determine the number of solutions of the congruence $p_\mathcal{M}(n,k)\equiv i\pmod*{m}$ for any $i\in\{0,1,\ldots,m-1\}$.
\begin{wn}
Let $m\in\mathbb{P}$, $i\in\{1,2,\ldots,m-1\}$ and $k\in\N_{\geqslant2}$ be fixed. The congruence 
\begin{align*}
    p_\mathcal{M}(n,k)\equiv i\pmod*{m}
\end{align*}
has $m(m-1)^{k-2}$ distinct solutions $\pmod{m^k}$. If $k=1$, then $p_\mathcal{M}(n,1)=1$ for all $n\in\mathbb{N}$.
\end{wn}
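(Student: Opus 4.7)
The plan is to reduce to a digit-counting problem via Theorem $6.2$ and then perform a standard fiber count in the group $(\Z/m\Z)^*$. The main preparatory step is to confirm that the natural period on which to count is $m^k$, and after that everything is purely combinatorial.

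First, I would invoke Theorem $3.3$ with $\mathcal{A}=\mathcal{M}=(m^{i-1})_{i\in\N_+}$ to determine that the fundamental period of $(p_\mathcal{M}(n,k)\pmod*{m})_{n\in\N}$ is exactly $m^k$. Indeed, the $m$-free part of $\lcm\{1,m,\ldots,m^{k-1}\}$ equals $1$, and the smallest integer $b$ with $m^b\geqslant 1+m+\cdots+m^{k-1}=(m^k-1)/(m-1)$ is $b=k$; hence the period is $m^{1+k-1}\cdot 1=m^k$. Thus it suffices to count the $n\in\{0,1,\ldots,m^k-1\}$ that satisfy $p_\mathcal{M}(n,k)\equiv i\pmod*{m}$.

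Second, writing such an $n$ in base $m$ as $n=c_{k-1}m^{k-1}+\cdots+c_1 m+c_0$ with digits $c_j\in\{0,\ldots,m-1\}$, Theorem $6.2$ gives
\begin{align*}
p_\mathcal{M}(n,k)\equiv\prod_{j=1}^{k-1}(c_j+1)\pmod*{m}.
\end{align*}
The key observation is that the digit $c_0$ does not appear on the right, so it can be chosen freely; this contributes an overall factor of $m$ to the final count.

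Third, since $m$ is prime and $i\not\equiv 0\pmod*{m}$, the product congruence forces each factor $c_j+1$ to be a unit modulo $m$; equivalently $c_j\in\{0,1,\ldots,m-2\}$ for $j=1,\ldots,k-1$. Setting $d_j:=c_j+1\in(\Z/m\Z)^*$, I would then count tuples $(d_1,\ldots,d_{k-1})\in((\Z/m\Z)^*)^{k-1}$ with $d_1 d_2\cdots d_{k-1}\equiv i\pmod*{m}$. Choosing $d_2,\ldots,d_{k-1}$ arbitrarily in $(\Z/m\Z)^*$ yields $(m-1)^{k-2}$ options, and then $d_1=i\cdot(d_2\cdots d_{k-1})^{-1}$ is uniquely determined in $(\Z/m\Z)^*$. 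Combined with the free factor of $m$ coming from $c_0$, this gives exactly $m(m-1)^{k-2}$ solutions modulo $m^k$, as claimed. There is no real obstacle: once Theorem $6.2$ is in hand, the corollary is pure combinatorial bookkeeping, with the only subtlety being the initial verification that $m^k$ is indeed the correct modulus on which to count.
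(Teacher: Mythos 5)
Your argument is correct and follows essentially the same route as the paper: reduce to residues $n \pmod*{m^k}$ (the paper justifies the period $m^k$ inside the proof of Theorem $6.2$, which you re-verify via Theorem $3.3$), apply Theorem $6.2$, note that $c_0$ is free, and count unit tuples with prescribed product, with one digit uniquely determined by the others. The only cosmetic difference is that you solve for $d_1$ where the paper solves for $c_{k-1}$.
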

\begin{proof}
Let $m\in\mathbb{P}$, $i\in\{1,2,\ldots,m-1\}$ and $k\in\N_+$ be given. The claim is clear for $k=1$. Hence, we assume that $k\geqslant2$. Since, we want to determine those $n\in\{0,1,\ldots,m^k-1\}$ which satisfy $p_\mathcal{M}(n,k)\equiv i\pmod*{m}$, we may represent $n$ in the base $m$, as
$$n=c_{k-1}m^{k-1}+c_{k-2}m^{k-2}+\ldots+c_1m+c_0.$$
From Theorem $6.2$ we know that
$$ p_\mathcal{M}(n,k)\equiv(c_1+1)(c_2+1)\ldots(c_{k-1}+1)\pmod*{m}.$$
Therefore, we have $m$ ways to choose $c_0$ and $m-1$ possibilities to select $c_j$ for each $j\in\{1,2,\ldots,k-2\}$ -- we have to omit those cases in which at least one of them is equal to $m-1$, otherwise we automatically get $p_\mathcal{M}(n,k)\equiv0\pmod*{m}$. The last coefficient $c_{k-1}$ is uniquely determined, as a consequence of elementary property of congruences. Finally, we conclude that  $p_\mathcal{M}(n,k)\equiv i\pmod*{m}$ has $m(m-1)^{k-2}$ distinct solutions $\pmod*{m^k}$.
\end{proof}
Due to the above fact, we can immediately deduce some information about $d(S_\mathcal{M}(i,m,k))$ for fixed prime number $m$.
\begin{wn}
Let $m$ be a fixed prime number and let $i\in\{1,2,\ldots,m-1\}$ be given. If $\mathcal{M}=(m^{n-1})_{n\in\mathbb{N}_+}$, then
\begin{align*}
    \lim_{N\to\infty}\frac{\#\{n\leqslant N: p_\mathcal{M}(n,k)\equiv i\pmod*{m}\}}{N}=\frac{(m-1)^{k-2}}{m^{k-1}}
\end{align*}
for any positive integer $k\geqslant2$.
\end{wn}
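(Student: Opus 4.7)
The plan is to combine the periodicity of $p_\mathcal{M}(n,k)\pmod*{m}$ with the counting result of Corollary $6.3$. The density of any periodic set of positive integers equals the proportion of residues hit inside one full period, so it suffices to (i) identify a period $T$ of the sequence $(p_\mathcal{M}(n,k)\pmod*{m})_{n\in\N}$, and (ii) count, among $n\in\{0,1,\ldots,T-1\}$, how many satisfy $p_\mathcal{M}(n,k)\equiv i\pmod*{m}$; the density is then the ratio of these two quantities.

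For step (i), I would apply Theorem $3.3$ directly with $p=m$, $N=1$, and the sequence $a_i=m^{i-1}$ for $i=1,\ldots,k$. In this setting $\lcm\{a_1,\ldots,a_k\}=m^{k-1}$, so its $m$-free part is $L=1$; moreover $e(a_i)=i-1$, and since
\begin{align*}
\sum_{i=1}^{k}m^{e(a_i)}=1+m+m^{2}+\cdots+m^{k-1}
\end{align*}
lies strictly between $m^{k-1}$ and $m^{k}$ for $k\geqslant 2$, the least $b$ with $m^{b}\geqslant\sum_{i=1}^{k}m^{e(a_i)}$ is $b=k$. Thus Theorem $3.3$ supplies that $(p_\mathcal{M}(n,k)\pmod*{m})_{n\in\N}$ is purely periodic with fundamental period $m^{N+b-1}L=m^{k}$.

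For step (ii), every $n\in\{0,1,\ldots,m^{k}-1\}$ admits a unique base-$m$ representation $n=c_{k-1}m^{k-1}+\cdots+c_{1}m+c_{0}$, and Corollary $6.3$ asserts that exactly $m(m-1)^{k-2}$ of these residues satisfy $p_\mathcal{M}(n,k)\equiv i\pmod*{m}$ when $m$ is prime and $i\in\{1,\ldots,m-1\}$. Dividing by the period length yields
\begin{align*}
d\bigl(S_\mathcal{M}(i,m,k)\bigr)=\frac{m(m-1)^{k-2}}{m^{k}}=\frac{(m-1)^{k-2}}{m^{k-1}},
\end{align*}
which is the desired limit.

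The only mildly delicate point is the verification that $m^{k}$ is genuinely a period (step (i)); once that is in place the argument is essentially a bookkeeping consequence of Corollary $6.3$. In fact, for the density conclusion one does not need the \emph{fundamental} period, merely \emph{a} period, so even a coarser estimate from Corollary $3.4$ (which gives $m^{k-1}\cdot m^{k-1}$ as a period) would suffice, with the counting rescaled accordingly, and the ratio still collapses to $(m-1)^{k-2}/m^{k-1}$.
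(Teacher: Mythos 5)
Your proposal is correct and follows essentially the same route as the paper: the paper's one-line proof likewise divides the count $m(m-1)^{k-2}$ from Corollary $6.3$ by the period $m^{k}$ (which it takes for granted from Theorem $3.3$, exactly as you verify). Your explicit check that $b=k$ and $L=1$ in Theorem $3.3$ is a welcome bit of added rigor but not a different argument.
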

\begin{proof}
The prior corollary provides that among any collection of $m^{k}$ consecutive values of $n$, we have exactly $m(m-1)^{k-2}$ indexes such that $p_\mathcal{M}(n,k)\equiv i\pmod*{m}$. Therefore, the equality in the statement holds.
\end{proof}

Additionally, we are able to describe explicitly characterization of those non-negative integers, which satisfy $p_\mathcal{M}(n,k)\equiv i\pmod*{m}$ for some small values of $m$.
\begin{np}
Let $k$ and $n$ be non-negative integers. If $\mathcal{M}=(2^{i-1})_{i\in\mathbb{N}_+}$ and
$$n\equiv c_{k-1}2^{k-1}+c_{k-2}2^{k-2}+\ldots+c_12+c_0 \pmod*{2^k},$$
then the following conditions are equivalent:
\begin{enumerate}
    \item $p_\mathcal{M}(n,k)\equiv 1\pmod*{2}$;
    \item $c_j\neq1 \text{ for all } j\in\{1,2,\ldots,k-1\}$;
    \item $n\equiv 0,1 \pmod{2^k}$.
\end{enumerate}
%\begin{align*}
 %   p_\mathcal{M}(n,k)\equiv 1\pmod*{2} \Leftrightarrow a_j\neq1 \text{ for all } j\in\{1,2,\ldots,k-1\} \Leftrightarrow
%\end{align*}
%\text{ for all } $j\in\{1,2,\ldots,k-1\}$.
\end{np}
\begin{np}
If $k$ and $n$ are fixed non-negative integers, $\mathcal{M}=(3^{i-1})_{i\in\mathbb{N}_+}$ and $n$ satisfies
$$n\equiv c_{k-1}3^{k-1}+c_{k-2}3^{k-2}+\ldots+c_13+c_0 \pmod*{3^k},$$
then $p_\mathcal{M}(n,k)\equiv1\pmod*{3}$ if and only if 
the number of non-zero coefficients $c_j$ for $j>0$ is even, and all of them are equal to $1$. Symmetrically, $p_\mathcal{M}(n,k)\equiv2\pmod*{3}$ if and only if the number of non-zero coefficients $c_j$ for $j>0$ is odd, and all of them are equal to $1$.

\end{np}

For $m=4$ the description becomes slightly more complex.

\begin{np}
Let $k$ and $n$ be arbitrarily fixed non-negative integers. If $\mathcal{M}=(4^{i-1})_{i\in\mathbb{N}_+}$ and $n$ satisfies
$$n\equiv c_{k-1}4^{k-1}+c_{k-2}4^{k-2}+\ldots+c_14+c_0 \pmod*{4^k},$$
then $p_\mathcal{M}(n,k)\equiv1\pmod*{4}$ if and only if 
the number of non-zero coefficients $c_j$ for $j>0$ is even, and all of them are equal to $2$. Analogously, $p_\mathcal{M}(n,k)\equiv3\pmod*{4}$ if and only if the number of non-zero coefficients $c_j$ for $j>0$ is odd, and all of them are equal to $2$. Moreover, $p_\mathcal{M}(n,k)\equiv2\pmod*{4}$ if and only if there exists a unique index $i_0\in\{1,2,\ldots,k-1\}$ such that $c_{i_0}=1$ and all of the remaining coefficients except $c_0$ are equal to either $0$ or $2.$

\end{np}

%Finally, the last of the easiest examples is the case of $m=6$.

%\begin{np}
%Let $k$ and $n$ be fixed non-negative integers. If $\mathcal{M}=(6^{i-1})_{i\in\mathbb{N}_+}$ and $n$ satisfies
%$$n\equiv a_{k-1}6^{k-1}+a_{k-2}6^{k-2}+\ldots+a_16+a_0 \pmod*{6^k},$$
%then $p_\mathcal{M}(n,k)\equiv1\pmod*{6}$ if and only if 
%the number of non-zero coefficients $a_j$ for $j>0$ is even, and all of them are equal to $4$. Analogously, $p_\mathcal{M}(n,k)\equiv5\pmod*{6}$ if and only if the number of non-zero coefficients $a_j$ for $j>0$ is odd, and all of them are equal to $4$. Further, $p_\mathcal{M}(n,k)\equiv3\pmod*{6}$ if and only if at least one of the coefficients $a_j$ for $j>0$ is equal to $2$, and the remaining ones except $a_0$ are arbitrarily chosen among $0, 2 $ and $4$. Additionally, $p_\mathcal{M}(n,k)\equiv2\pmod*{6}$ if and only if $\#\{j>0:a_j=1\}$ and $\#\{j>0:a_j=4\}$ are of distinct parity and $\#\{j>0:a_j=2 \text{ or } a_j=5\}=0$. Symmetrically, $p_\mathcal{M}(n,k)\equiv4\pmod*{6}$ if and only if both $\#\{j>0:a_j=1\}$ and $\#\{j>0:a_j=4\}$ are of the same parity and $\#\{j>0:a_j=2 \text{ or } a_j=5\}=0$.

%\end{np}
A similar characterization could be presented in the case of $m=6$. Evidently, as the value of $m$ grows, the descriptions like above become more and more intricate. At the moment, we try to generalize the result obtained in Corollary $6.3$. To do that let us fix positive integer $m>1$ and introduce the following notation.

\begin{df}
For a fixed non-negative integers $m>1, k>0$ and $i\in\{0,1,\ldots,\linebreak m-1\}$, $r_\mathcal{M}(i,m,k)$ is a number of those $n \pmod*{m^k}$, which satisfy \linebreak $p_\mathcal{M}(n,k)\equiv i\pmod*{m}$, that is
\begin{align}
r_\mathcal{M}(i,m,k)=\#\{n \pmod*{m^k}: p_\mathcal{M}(n,k)\equiv i \pmod*{m}\}.
\end{align}
\end{df}

Undoubtedly, Theorem $6.2$ confirms that $d(S_\mathcal{M}(i,m,k))=r_\mathcal{M}(i,m,k)/m^k$ for any parameters $i,k$ and $m$. Further, it is clear that 
\begin{align*}
    r_\mathcal{M}(i,m,1)= \begin{cases}
        0, & \text{if } i \neq 1,\\
        m, & \text{if } i = 1.
        \end{cases}
\end{align*}
Hence, we may consider the case of $k>1.$ Due to Theorem $6.2$ the congruence 
\begin{align*}
    p_\mathcal{M}(n,k)\equiv i\pmod*{m}
\end{align*}
can be rearranged to 
\begin{align}
    (c_1+1)(c_2+1)\ldots(c_{k-1}+1)\equiv p_\mathcal{M}(n,k-1)(c_{k-1}+1)\equiv i\pmod*{m},
\end{align}
where the coefficients $c_j$ satisfy
\begin{align*}
    n \pmod*{m^k} = c_{k-1}m^{k-1}+c_{k-2}m^{k-2}+\ldots+c_1m+c_0.
\end{align*}
If we treat $c_{k-1}+1$ as a variable, then $(6.5)$ has a solution if and only if \linebreak $\gcd(p_\mathcal{M}(n,k-1),m)\mid i$. Moreover, if the condition holds, we get exactly \linebreak $\gcd(p_\mathcal{M}(n,k-1),m)$ distinct solutions $\pmod*{m}$, and, since $\gcd(i,m)\mid i$ we additionally obtain that $\gcd(p_\mathcal{M}(n,k-1),m)\mid \gcd(i,m)$. In conclusion, we deduce \linebreak a subsequent recurrence relation.
\begin{pr}
For fixed non-negative integers $k, m>1$ and $i\in\{0,1,\ldots,\linebreak m-1\}$, the value $r_\mathcal{M}(i,m,k)$ may be expressed as
\begin{align}
    r_\mathcal{M}(i,m,k)=\sum_{\substack{l=0\\
                  \gcd(l,m)\mid\gcd(i,m)}}^{m-1}r_\mathcal{M}(l,m,k-1)\gcd(l,m).
\end{align}
In addition, if $k=1$, then 
\begin{align*}
    r_\mathcal{M}(i,m,1)= \begin{cases}
        0, & \text{if } i \neq 1\\
        m, & \text{if } i = 1.
        \end{cases}
\end{align*}
\end{pr}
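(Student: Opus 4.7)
\medskip
\noindent\textbf{Proof proposal.} The strategy is to fix the top digit $c_{k-1}$ of $n$ and condition on the value of $p_\mathcal{M}(n,k-1) \pmod*{m}$, reducing the count to a sum of linear-congruence counting problems. The base case $k=1$ is immediate since $p_\mathcal{M}(n,1)=1$ for every $n\in\N$ (as $a_1=1$), so $r_\mathcal{M}(1,m,1)=m$ and $r_\mathcal{M}(i,m,1)=0$ for $i\neq 1$.

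For the inductive step, fix $k\geqslant 2$ and write any residue $n\pmod*{m^k}$ uniquely as
\begin{align*}
    n \equiv c_{k-1}m^{k-1}+n' \pmod*{m^k}, \qquad 0\leqslant c_{k-1}\leqslant m-1,\ 0\leqslant n'\leqslant m^{k-1}-1,
\end{align*}
so that the lower digits $c_0,c_1,\ldots,c_{k-2}$ of $n$ agree with those of $n'$. By Theorem $6.2$ applied at levels $k$ and $k-1$, together with the fact that $(p_\mathcal{M}(\cdot,k-1)\pmod*{m})$ has period $m^{k-1}$,
\begin{align*}
    p_\mathcal{M}(n,k)\equiv (c_{k-1}+1)\prod_{j=1}^{k-2}(c_j+1)\equiv (c_{k-1}+1)\,p_\mathcal{M}(n',k-1)\pmod*{m}.
\end{align*}

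Next I would count solutions by conditioning. As $c_{k-1}$ ranges over $\{0,1,\ldots,m-1\}$, the factor $c_{k-1}+1$ runs through $\{1,2,\ldots,m\}$, which is a complete residue system modulo $m$. Hence, setting $l:=p_\mathcal{M}(n',k-1)\bmod m$ and viewing $x=c_{k-1}+1$ as an unknown, the number of admissible $c_{k-1}$ equals the number of $x\in\{0,1,\ldots,m-1\}$ satisfying $lx\equiv i\pmod*{m}$. By the standard theory of linear congruences, this count is $\gcd(l,m)$ when $\gcd(l,m)\mid i$, and $0$ otherwise; moreover $\gcd(l,m)\mid i$ is equivalent to $\gcd(l,m)\mid \gcd(i,m)$ since $\gcd(l,m)\mid m$. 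Grouping the $m^{k-1}$ residues $n'\pmod*{m^{k-1}}$ according to the value $l=p_\mathcal{M}(n',k-1)\bmod m$ gives exactly $r_\mathcal{M}(l,m,k-1)$ choices producing value $l$, so summing yields
\begin{align*}
    r_\mathcal{M}(i,m,k)=\sum_{\substack{l=0\\ \gcd(l,m)\mid \gcd(i,m)}}^{m-1} r_\mathcal{M}(l,m,k-1)\,\gcd(l,m),
\end{align*}
which is the claimed identity.

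There is essentially no real obstacle here; the work has been done in Theorem $6.2$ and in the discussion preceding the proposition. The only point requiring a little care is the bijection $c_{k-1}\mapsto c_{k-1}+1\pmod*{m}$ and the switch from "$\gcd(l,m)\mid i$" to "$\gcd(l,m)\mid \gcd(i,m)$"; both are routine. Everything else is bookkeeping: decomposing $n$ by its top base-$m$ digit, applying the multiplicative structure of the formula in Theorem $6.2$, and recognising the induced partition of residues modulo $m^{k-1}$ by the value of $p_\mathcal{M}(\cdot,k-1)\pmod*{m}$.
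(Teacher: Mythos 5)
Your proposal is correct and follows essentially the same route as the paper: the paper derives the recurrence from the rewriting $p_\mathcal{M}(n,k)\equiv p_\mathcal{M}(n,k-1)(c_{k-1}+1)\pmod*{m}$ via Theorem $6.2$, treats $c_{k-1}+1$ as the unknown in a linear congruence with exactly $\gcd(l,m)$ solutions when $\gcd(l,m)\mid i$, and groups the lower residues by the value of $p_\mathcal{M}(\cdot,k-1)\bmod m$. Your write-up just makes the bookkeeping (the split $n\equiv c_{k-1}m^{k-1}+n'$ and the equivalence $\gcd(l,m)\mid i\iff\gcd(l,m)\mid\gcd(i,m)$) more explicit than the paper's informal derivation.
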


Now, we deduce a few consequences of the relation $(6.6)$. First, we observe that a number $i$ may be immediately replaced by $\gcd(i,m)$.
\begin{wn}
If $k,m>1$ and $i\in\{0,1,\ldots,m-1\}$ are arbitrary integers, then
\begin{align*}
    r_\mathcal{M}(i,m,k)=r_\mathcal{M}(\gcd(i,m),m,k).
\end{align*}
\end{wn}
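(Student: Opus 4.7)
The statement follows almost immediately from the recurrence (6.6) in Proposition 6.8, so the proof is essentially a single observation packaged carefully.

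My first step is to note that the right-hand side of (6.6), namely
$$\sum_{\substack{l=0\\ \gcd(l,m)\mid\gcd(i,m)}}^{m-1}r_\mathcal{M}(l,m,k-1)\gcd(l,m),$$
depends on $i$ only through the quantity $\gcd(i,m)$. Indeed, the summand $r_\mathcal{M}(l,m,k-1)\gcd(l,m)$ does not involve $i$ at all, and the only place $i$ appears is in the summation condition $\gcd(l,m)\mid\gcd(i,m)$, which manifestly is determined by $\gcd(i,m)$.

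Next, I set $d=\gcd(i,m)$. Because $d\mid m$, we have $\gcd(d,m)=d=\gcd(i,m)$, so applying (6.6) with $i$ replaced by $d$ yields an identical expression on the right-hand side. Therefore
$$r_\mathcal{M}(i,m,k)=r_\mathcal{M}(d,m,k)=r_\mathcal{M}(\gcd(i,m),m,k),$$
which is the claim.

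The only minor subtlety I foresee is the edge case $i=0$: by the usual convention $\gcd(0,m)=m$, which lies outside the stated range $\{0,1,\ldots,m-1\}$. I would address this by identifying $m$ with $0\pmod{m}$ in the second argument of $r_\mathcal{M}$, so that $r_\mathcal{M}(m,m,k)$ is by definition equal to $r_\mathcal{M}(0,m,k)$ and the equality becomes tautological for $i=0$. Beyond this cosmetic point, there is no real obstacle; all the genuine content has already been extracted in the derivation of (6.6), and the corollary is a direct consequence of the shape of that recurrence.
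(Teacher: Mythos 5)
Your proposal is correct and is essentially identical to the paper's own (one-line) proof: both rest on the observation that the right-hand side of $(6.6)$ depends on $i$ only through $\gcd(i,m)$, together with $\gcd(\gcd(i,m),m)=\gcd(i,m)$. Your remark on the $i=0$ case (reading $\gcd(0,m)=m$ as $0 \pmod* m$) is a reasonable extra precaution that the paper leaves implicit.
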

\begin{proof}
Clearly, the sum in $(6.6)$ does not change, if we substitute $i$ by $\gcd(i,m)$.
\end{proof}
\begin{wn}
The equality
\begin{align*}
    r_\mathcal{M}(i,m,2)=m
\end{align*}
holds for all $m>1$ and $i\in\{0,1,\ldots,m-1\}$.
\end{wn}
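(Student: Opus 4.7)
The plan is to obtain this as an immediate specialization of the recurrence in Proposition 6.7, while also noting it can be seen directly from Theorem 6.2. First I would apply formula $(6.6)$ with $k = 2$, giving
\begin{align*}
r_\mathcal{M}(i,m,2) = \sum_{\substack{l=0\\ \gcd(l,m)\mid\gcd(i,m)}}^{m-1} r_\mathcal{M}(l,m,1)\gcd(l,m).
\end{align*}
The base case of Proposition 6.7 says $r_\mathcal{M}(l,m,1) = 0$ for $l \neq 1$ and $r_\mathcal{M}(1,m,1) = m$, so only the term $l = 1$ can contribute. Since $\gcd(1,m) = 1$ divides $\gcd(i,m)$ trivially for every admissible $i$, that term does appear in the sum, and it equals $m \cdot 1 = m$.

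Alternatively, and this would be the confirmation I would include as a brief sanity check, Theorem 6.2 at $k=2$ says that if $n \equiv c_1 m + c_0 \pmod{m^2}$ with $c_0, c_1 \in \{0,1,\ldots,m-1\}$, then $p_\mathcal{M}(n,2) \equiv c_1 + 1 \pmod{m}$. As $c_1$ runs through $\{0,1,\ldots,m-1\}$, the value $c_1 + 1 \pmod{m}$ hits every residue exactly once, and for each such $c_1$ there are $m$ choices of $c_0$. Hence each residue class $i \pmod{m}$ is attained by exactly $m$ values of $n \pmod{m^2}$, which is the claimed count.

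There is no real obstacle here: the statement is a one-step unwinding of the recurrence once the base case is in hand, and both arguments (via $(6.6)$ and via Theorem 6.2 directly) are essentially bookkeeping. I would present the recurrence-based proof as the main argument for consistency with how the previous corollary was phrased.
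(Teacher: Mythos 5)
Your main argument is exactly the paper's proof: specialize the recurrence $(6.6)$ to $k=2$, observe that only the term $l=1$ survives because $r_\mathcal{M}(l,m,1)=0$ for $l\neq1$, and read off $r_\mathcal{M}(1,m,1)\gcd(1,m)=m$. The additional sanity check via Theorem 6.2 is correct but not needed; the proposal matches the paper's approach and is complete.
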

\begin{proof}
The sum in $(6.6)$ is reduced only to the case of $l=1$, and since $ r_\mathcal{M}(1,m,1)=m$, we obtain the equation from the statement, as desired. 
\end{proof}
Another one consequence is an alternative formula for $r_\mathcal{M}(i,m,k)$. By Corollary $6.10$, we may assume without loss of generality that $i\mid m$.

\begin{wn}
If $k>2,m>1$ and $i$ are positive integers such that $i\mid m$, then
\begin{align*}
    r_\mathcal{M}(i,m,k)=\sum_{l\mid i}l\varphi(m/l)r_\mathcal{M}(l,m,k-1),
    %\sum_{\substack{j=0\\                 \gcd(j,m)=1}}^{\frac{m}{l}-1}1
\end{align*}
where $\varphi$ denotes Euler's totient function.
\end{wn}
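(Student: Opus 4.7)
The plan is to derive the formula from the recurrence in Proposition 6.9 by partitioning the summation range according to the value of $\gcd(l,m)$ and invoking Corollary 6.10.

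First, since $i\mid m$, we have $\gcd(i,m)=i$, so the summation condition $\gcd(l,m)\mid\gcd(i,m)$ appearing in Proposition 6.9 simplifies to $\gcd(l,m)\mid i$. Writing $d=\gcd(l,m)$, this forces $d$ to be a divisor of $i$ (and automatically of $m$). My plan is then to group the sum in $(6.6)$ by the common value of $d$:
\begin{align*}
r_\mathcal{M}(i,m,k)=\sum_{d\mid i}\ \sum_{\substack{0\leqslant l\leqslant m-1\\ \gcd(l,m)=d}} d\cdot r_\mathcal{M}(l,m,k-1).
\end{align*}

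Next, by Corollary 6.10 every inner summand satisfies $r_\mathcal{M}(l,m,k-1)=r_\mathcal{M}(d,m,k-1)$, so the inner sum reduces to $d\cdot r_\mathcal{M}(d,m,k-1)$ multiplied by the cardinality
\begin{align*}
\#\{l\in\{0,1,\ldots,m-1\}:\gcd(l,m)=d\}.
\end{align*}
For each divisor $d$ of $m$, the standard bijection $l=dj$ with $\gcd(j,m/d)=1$ identifies this set with the reduced residues modulo $m/d$, so its cardinality equals $\varphi(m/d)$ (one must only observe that replacing $l=0$ with $l=m$ does not change the count, since $\gcd(0,m)=\gcd(m,m)=m$). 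Substituting yields
\begin{align*}
r_\mathcal{M}(i,m,k)=\sum_{d\mid i}d\,\varphi(m/d)\,r_\mathcal{M}(d,m,k-1),
\end{align*}
which is the desired formula (after relabelling $d$ as $l$).

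There is no real obstacle here beyond the bookkeeping around $l=0$; the only point worth verifying is that the counting identity $\#\{l\in\{0,\ldots,m-1\}:\gcd(l,m)=d\}=\varphi(m/d)$ remains valid for every divisor $d$ of $m$, including $d=m$ (which contributes only when $m\mid i$, i.e.\ when $i$ represents the zero residue class).
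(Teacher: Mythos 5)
Your proposal is correct and follows essentially the same route as the paper: both arguments start from the recurrence $(6.6)$, note that $i\mid m$ turns the condition into $\gcd(l,m)\mid i$, group the terms by the common value $d=\gcd(l,m)$, replace $r_\mathcal{M}(l,m,k-1)$ by $r_\mathcal{M}(d,m,k-1)$ via Corollary $6.10$, and count the residues in each class by $\varphi(m/d)$ (the paper does this through the substitution $l\mapsto lj$ and the identity $\gcd(lj,m)=l\gcd(j,m/l)$). Your explicit remark about the $l=0$ versus $l=m$ bookkeeping is a welcome clarification that the paper glosses over.
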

\begin{proof}
Let us fix $i,k$ and $m$ as above. Proposition $6.9$ states that
\begin{align*}
    r_\mathcal{M}(i,m,k)&=\sum_{\substack{l=0\\
                  \gcd(l,m)\mid\ i}}^{m-1}r_\mathcal{M}(l,m,k-1)\gcd(l,m)\\
                  &=\sum_{l\mid i}l\sum_{\substack{j=1\\
                  \gcd(lj,m)=l}}^{\frac{m}{l}}r_\mathcal{M}(jl,m,k-1).
\end{align*}
Due to Corollary $6.10$, $r_\mathcal{M}(jl,m,k-1)$ can be replaced by $r_\mathcal{M}(l,m,k-1)$. Moreover, $\gcd(lj,m)=l\gcd(j,m/l)$. Thus,
\begin{align*}
    r_\mathcal{M}(i,m,k)=\sum_{l\mid i}lr_\mathcal{M}(l,m,k-1)\sum_{\substack{j=1\\
                  \gcd(j,m/l)=1}}^{\frac{m}{l}}1=\sum_{l\mid i}l\varphi(m/l)r_\mathcal{M}(l,m,k-1),
\end{align*}
so the proof is complete.
\end{proof}

It is worth to see, how the above corollary works in practice. Hence, let us apply it and calculate $r_\mathcal{M}(i,m,k)$ for $m=6$.
\begin{np}
For all positive integers $k\geqslant2$, we have
\begin{align*}
    r_\mathcal{M}(i,6,k)= \begin{cases}
        3\cdot2^{k-1}, & \text{if } i\in\{1,5\}\\
        3\cdot2^{k-1}(2^{k-1}-1), & \text{if } i\in\{2,4\}\\
        6\cdot(3^{k-1}-2^{k-1}), & \text{if } i = 3.
        \end{cases}
\end{align*}
\end{np}
\begin{proof}
By Corollary $6.11$, we can easily notice that the equality holds for $k=2$ and all the values of $i$. For $k>2$, let us first assume that $\gcd(i,6)=1,$ that is $i\in\{1,5\}$. Since $r_\mathcal{M}(i,6,k)=r_\mathcal{M}(\gcd(i,6),6,k)$, it is enough to compute $r_\mathcal{M}(1,6,k)$. Now, we may take advantage of Corollary $6.12$ and deduce:
\begin{align*}
    r_\mathcal{M}(1,m,k)&=\varphi(6)r_\mathcal{M}(1,6,k-1)=\varphi^2(6)r_\mathcal{M}(1,6,k-2)=\ldots\\&=\varphi^{k-2}(6)r_\mathcal{M}(1,6,2)=6\cdot\varphi^{k-2}(6)=3\cdot2^{k-1},
\end{align*}
so the proof is complete for $i\in\{1,5\}$. Next, we consider the case, in which $\gcd(i,6)=2$. Once again Corollary $6.10$ provides that it is enough to derive the formula for $r_\mathcal{M}(2,6,k)$. Clearly, by Corollary $6.12$ we obtain
\begin{align*}
    r_\mathcal{M}(2,6,k)&=\varphi(6)r_\mathcal{M}(1,6,k-1)+2\varphi(3)r_\mathcal{M}(2,6,k-1)=\varphi(6)r_\mathcal{M}(1,6,k-1)\\&\phantom{=}+2\varphi(3)\varphi(6)r_\mathcal{M}(1,6,k-2)+2^2\varphi^2(3)r_\mathcal{M}(2,6,k-3)=\ldots\\ &=\varphi(6)r_\mathcal{M}(1,6,k-1)+2\varphi(3)\varphi(6)r_\mathcal{M}(1,6,k-2)+\ldots\\ &\phantom{=}+2^{k-3}\varphi^{k-3}(3)\varphi(6)r_\mathcal{M}(1,6,2)+2^{k-2}\varphi^{k-2}(3)r_\mathcal{M}(2,6,2).
\end{align*}
Since $\varphi(3)=\varphi(6)=2$, $r_\mathcal{M}(2,6,2)=6$ and $r_\mathcal{M}(1,6,j)=6\cdot\varphi^{j-2}(6)$ for all $2\leqslant j \leqslant k-1$, we can rewrite $r_\mathcal{M}(2,6,k)$ in the following way
\begin{align*}
    r_\mathcal{M}(2,6,k)&=\varphi(6)\cdot6\cdot\varphi^{k-3}(6)+2\varphi(3)\varphi(6)\cdot6\cdot\varphi^{k-4}(6)+\ldots\\ &\phantom{=}+2^{k-3}\varphi^{k-3}(3)\varphi(6)\cdot6+2^{k-2}\varphi^{k-2}(3)\cdot6\\
    &=6\cdot2^{k-2}+6\cdot2^{k-1}+\ldots+6\cdot2^{2k-5}+6\cdot2^{2k-4}\\
    &=6\cdot2^{k-2}(2^{k-1}-1)=3\cdot2^{k-1}(2^{k-1}-1),
\end{align*}
as required. Finally, we have to determine $r_\mathcal{M}(3,6,k)$. As before, we apply Corollary $6.12$ and get
\begin{align*}
    r_\mathcal{M}(3,6,k)&=\varphi(6)r_\mathcal{M}(1,6,k-1)+3\varphi(2)r_\mathcal{M}(3,6,k-1)=\varphi(6)r_\mathcal{M}(1,6,k-1)\\
    &\phantom{=}+3\varphi(2)\varphi(6)r_\mathcal{M}(1,6,k-2)+3^2\varphi^2(2)r_\mathcal{M}(3,6,k-3)=\ldots\\
    &=\varphi(6)r_\mathcal{M}(1,6,k-1)+3\varphi(2)\varphi(6)r_\mathcal{M}(1,6,k-2)+\ldots\\
    &\phantom{=}+3^{k-3}\varphi^{k-3}(2)\varphi(6)r_\mathcal{M}(1,6,2)+3^{k-2}\varphi^{k-2}(2)r_\mathcal{M}(3,6,2)
\end{align*}
In order to conclude the final formula for $r_\mathcal{M}(3,6,k)$, we need to make analogous observations like in the previous case and, additionally, recall that $3^{k-1}-2^{k-1}=3^{k-2}+2\cdot3^{k-3}+\ldots+2^{k-3}\cdot3+2^{k-2}$. Afterward, we obtain
\begin{align*}
    r_\mathcal{M}(3,6,k)&=\varphi(6)\cdot6\cdot\varphi^{k-3}(6)+3\varphi(2)\varphi(6)\cdot6\cdot\varphi^{k-4}(6)+\ldots\\
    &\phantom{=}+3^{k-3}\varphi^{k-3}(2)\varphi(6)\cdot6+3^{k-2}\varphi^{k-2}(2)\cdot6\\
    &=6\cdot2^{k-2}+6\cdot3\cdot2^{k-3}+\ldots+6\cdot3^{k-3}\cdot2+6\cdot3^{k-2}\\
    &=6\cdot(3^{k-1}-2^{k-1}),
\end{align*}
and we finish the proof.
\end{proof}

Obviously, it would be more convenient to possess an explicit formula for $r_\mathcal{M}(i,m,k)$. However, it is difficult to determine such an expression in general. Nevertheless, in some special cases we are able to obtain an effective formula for $r_\mathcal{M}(i,m,k)$.

\begin{np}
Let $m,k>1$ be fixed positive integers. If $i\in\{0,1,\ldots,m-1\}$ satisfies $\gcd(i,m)=1$, then
\begin{align*}
    r_\mathcal{M}(i,m,k)=\varphi^{k-2}(m)m
\end{align*}
and, in particular, $$d(S_\mathcal{M}(i,m,k))=\varphi^{k-2}(m)/m^{k-1}.$$ 
\end{np}
\begin{proof}
Let $i,k$ and $m$ be as above. The statement is clear for $k=2$, by Corollary $6.11$. Thus, we may assume that $k>2$. Now, Corollary $6.10$ and  $6.12$ assert that
\begin{align*}
    r_\mathcal{M}(i,m,k)=r_\mathcal{M}(1,m,k)=\varphi(m)r_\mathcal{M}(1,m,k-1).
\end{align*}
Therefore, $r_\mathcal{M}(i,m,k)=\varphi^{k-2}(m)m$ and $d(S_\mathcal{M}(i,m,k))=\varphi^{k-2}(m)/m^{k-1}$, as required.
\end{proof}
We can easily observe that the above result agrees with both Corollary $6.3$ and Corollary $6.4$. Moreover, it may be significantly generalized by the following theorem.

\begin{tw}
Let $i,k$ and $m$ be positive integers such that $k\geqslant2$ and $i\cdot\normalfont{\rad(i)}\mid m$. If the prime factorization of $i$ is given by $i=p_1^{\alpha_1}p_2^{\alpha_2}\ldots p_s^{\alpha_s},$ where $s\in\mathbb{N}_+$, $p_{j}\in\mathbb{P}$ for each $j\in\{1,2,\ldots,s\}$ and $p_{j_1}\neq p_{j_2}$ for all $j_1,j_2\in\{1,2,\ldots,s\}$ with $j_1\neq j_2$, then 
\begin{align}
    r_\mathcal{M}(i,m,k)=m\varphi^{k-2}(m)\prod_{j=1}^{s}\binom{\alpha_j+k-2}{\alpha_j}.
\end{align}
\end{tw}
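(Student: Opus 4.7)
The plan is to proceed by induction on $k\geqslant2$. The base case $k=2$ follows directly from Corollary $6.11$: the claimed expression reduces to $m\cdot 1\cdot\prod_{j=1}^s\binom{\alpha_j}{\alpha_j}=m=r_\mathcal{M}(i,m,2)$.

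For the inductive step, assume the formula for $k-1$ and apply Corollary $6.12$ (legitimate since $i\mid i\cdot\rad(i)\mid m$):
\begin{align*}
    r_\mathcal{M}(i,m,k)=\sum_{l\mid i}l\,\varphi(m/l)\,r_\mathcal{M}(l,m,k-1).
\end{align*}
Every divisor $l$ of $i$ satisfies $l\cdot\rad(l)\mid i\cdot\rad(i)\mid m$, since $l$ inherits only the primes of $i$; writing $l=\prod_{j=1}^sp_j^{\beta_j}$ with $0\leqslant\beta_j\leqslant\alpha_j$, the inductive hypothesis yields
\begin{align*}
    r_\mathcal{M}(l,m,k-1)=m\varphi^{k-3}(m)\prod_{j=1}^s\binom{\beta_j+k-3}{\beta_j},
\end{align*}
where the case $l=1$ matches Example $6.14$ via the convention that an empty product equals $1$.

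The decisive simplification is the identity $l\varphi(m/l)=\varphi(m)$ for all $l\mid i$, and this is precisely where the hypothesis $i\cdot\rad(i)\mid m$ earns its keep. That condition forces $v_{p_j}(m)\geqslant\alpha_j+1>\beta_j$, so each $p_j$ still divides $m/l$; consequently $m$ and $m/l$ share the same set of prime divisors, and the formula $\varphi(n)=n\prod_{p\mid n}(1-1/p)$ gives $\varphi(m/l)=\varphi(m)/l$. Substituting this into the recurrence produces
\begin{align*}
    r_\mathcal{M}(i,m,k)=m\varphi^{k-2}(m)\sum_{l\mid i}\prod_{j=1}^s\binom{\beta_j+k-3}{\beta_j}=m\varphi^{k-2}(m)\prod_{j=1}^s\sum_{\beta_j=0}^{\alpha_j}\binom{\beta_j+k-3}{\beta_j},
\end{align*}
where the last equality exploits that the divisors of $i$ are parametrized independently by the tuples $(\beta_1,\ldots,\beta_s)$.

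The concluding step is the hockey-stick identity $\sum_{\beta=0}^{\alpha}\binom{\beta+r}{\beta}=\binom{\alpha+r+1}{\alpha}$ with $r=k-3$, which collapses each inner sum to $\binom{\alpha_j+k-2}{\alpha_j}$ and delivers the desired expression. The main obstacle is recognizing and justifying the identity $l\varphi(m/l)=\varphi(m)$; once that is in place, the argument is pure bookkeeping followed by hockey-stick. As a sanity check, specializing to $\gcd(i,m)=1$ makes $s=0$ and the product empty, immediately recovering Example $6.14$.
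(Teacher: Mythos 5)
Your proof is correct, but it takes a genuinely different route from the paper's. You run an induction on $k$ through the recurrence of Corollary $6.12$, reduce everything to the identity $l\varphi(m/l)=\varphi(m)$ for $l\mid i$ (correctly justified: $i\cdot\rad(i)\mid m$ forces $\rad(m/l)=\rad(m)$), factor the divisor sum over the independent exponent tuples $(\beta_1,\ldots,\beta_s)$, and finish with the hockey-stick identity $\sum_{\beta=0}^{\alpha}\binom{\beta+k-3}{\beta}=\binom{\alpha+k-2}{\alpha}$; the handling of the divisor $l=1$ via Example $6.14$ and the empty-product convention is the right way to patch the fact that the theorem's statement formally requires $s\geqslant1$. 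The paper instead argues directly and non-inductively: it counts solutions of $(c_1+1)(c_2+1)\cdots(c_{k-1}+1)\equiv p_1^{\alpha_1}\cdots p_s^{\alpha_s}\pmod*{m}$ by distributing each $p_j^{\alpha_j}$ among the $k-1$ factors in $\binom{\alpha_j+k-2}{\alpha_j}$ ways and then, for a fixed distribution, counting the unit parts via $\varphi(m/\gamma_t)\gamma_t=\varphi(m)$ --- the same arithmetic identity you need, used for the same purpose. Your approach buys a shorter argument that reuses the recurrence machinery already built in Section $6$ and makes the binomial coefficient emerge from a standard summation identity; the paper's approach buys a combinatorial explanation of where $\binom{\alpha_j+k-2}{\alpha_j}$ comes from (multisubsets of the $k-1$ positions) without needing induction or the hockey-stick lemma. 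Both are valid; no gaps in yours.
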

\begin{proof}
Let $i,k$ and $m$ be as above. We want to compute the number of solutions $n\pmod*{m^k}$ of 
\begin{align*}
    p_\mathcal{M}(n,k)\equiv p_1^{\alpha_1}p_2^{\alpha_2}\ldots p_s^{\alpha_s}\pmod*{m}.
\end{align*}
Hence, it is enough to find all $n\in\{0,1,\ldots,m^{k}-1\}$, which satisfy the foregoing condition. Thus, let $n\in\{0,1,\ldots,m^{k}-1\}$. By Theorem $6.2$, we may represent $n$ in base $m$ as
\begin{align*}
    n=c_{k-1}m^{k-1}+c_{k-2}m^{k-2}+\ldots+c_1m+c_0,
\end{align*}
for some $c_0,c_1,\ldots c_{k-1}\in\{0,1,\ldots,m-1\}$;
and determine the number of solutions of 
\begin{align}
    (c_1+1)(c_2+1)\ldots(c_{k-1}+1)\equiv p_1^{\alpha_1}p_2^{\alpha_2}\ldots p_s^{\alpha_s}\pmod*{m}.
\end{align}
First, we can immediately notice that the congruence does not depend on $c_0,$ so we have $m$ possibilities to choose this element. Now, let us fix an index $j\in\{1,2,\ldots,s\}$ and consider, in how many ways we may split $p_j^{\alpha_j}$ between $c_1+1,c_2+1,\ldots,c_{k-1}+1$ in order to obtain that $$p_j^{\beta_{j,1}}||c_1+1,\quad p_j^{\beta_{j,2}}||c_2+1,\quad\ldots,\quad p_j^{\beta_{j,k-1}}||c_{k-1}+1,$$ where $\beta_{j,t}\in\mathbb{N}$ for each $t\in\{1,2,\ldots,k-1\}$ and $$\beta_{j,1}+\beta_{j,2}+\ldots+\beta_{j,k-1}=\alpha_j.$$
The number of non-negative integer solutions of the above Diophantine equation is the number of multisubsets of size $\alpha_j$ from a $k-1$-element set -- %(for more details, see \cite[p.~150]{BR}).
that is $\binom{\alpha_j+(k-1)-1}{\alpha_j}=\binom{\alpha_j+k-2}{\alpha_j}.$ Further, let us set such a partition for all $j\in\{1,2,\ldots,s\}$ and put
\begin{align*}
    \gamma_t=p_1^{\beta_{1,t}}p_2^{\beta_{2,t}}\ldots p_s^{\beta_{s,t}},
\end{align*}
for every $t\in\{1,2,\ldots,k-1\}$. Therefore, we can interchange the congruence $(6.8)$ with
\begin{align*}
    \gamma_1x_1\gamma_2x_2\ldots\gamma_{k-1}x_{k-1}\equiv p_1^{\alpha_1}p_2^{\alpha_2}\ldots p_s^{\alpha_s}\pmod*{m},
\end{align*}
where all $x_t$ are new unknowns such that each of them is $p_j-$free for every $j\in\{1,2,\ldots,s\}$, $\gcd(x_t,m)=1$ and $0<x_t<m/\gamma_t$ for $t\in\{1,2,\ldots,k-1\}$. There are exactly $\varphi(m/\gamma_t)$ possible values of $x_t$ for each $t\in\{1,2,\ldots,k-2\}$. We arbitrarily fix them and replace $\gamma_{k-1}x_{k-1}$  by $x$ to get  
\begin{align*}
    Q\gamma_1\gamma_2\ldots\gamma_{k-2}x\equiv p_1^{\alpha_1}p_2^{\alpha_2}\ldots p_s^{\alpha_s}\pmod*{m},
\end{align*}
where $Q=\prod_{t=1}^{k-2}x_t$. Since $i=\gamma_1\gamma_2\ldots\gamma_{k-1}$ and $i|m$, we deduce that $\gcd(Q\gamma_1\gamma_2\ldots\gamma_{k-2},m)=\gamma_1\gamma_2\ldots\gamma_{k-2}|i$. Thus, the congruence has exactly $\gamma_1\gamma_2\ldots\gamma_{k-2}$ distinct solutions $\pmod*{m}$.
Now, let us fix $t\in\{1,2,\ldots,k-2\}$ and recall that Euler's totient function satisfies 
$$\varphi(ab)=\varphi(a)\varphi(b)\cdot\frac{\gcd(a,b)}{\varphi(\gcd(a,b))} \quad \text{and} \quad \frac{a}{\varphi(a)}=\frac{\rad(a)}{\varphi(\rad(a))},$$
where $a$ and $b$ are arbitrary positive integers. Hence, the following equalities
\begin{align*}
    \varphi(m)=\varphi\left(\frac{m}{\gamma_t}\cdot\gamma_t\right)=\varphi\left(\frac{m}{\gamma_t}\right)\varphi(\gamma_t)\cdot\frac{\gcd(m/\gamma_t,\gamma_t)}{\varphi(\gcd(m/\gamma_t,\gamma_t))}
\end{align*}
hold. Since $\gamma_t\cdot\rad(i)|i\cdot\rad(i)|m$, we deduce that $\rad(\gcd(m/\gamma_t,\gamma_t))=\rad(\gamma_t)$. Applying the remaining property of $\varphi$ for the last two terms of the above expression together with the equality from the previous sentence, we obtain 
\begin{align*}
    \varphi(\gamma_t)\cdot\frac{\gcd(m/\gamma_t,\gamma_t)}{\varphi(\gcd(m/\gamma_t,\gamma_t))}=\frac{\gamma_t\varphi(\rad(\gamma_t))}{\rad(\gamma_t)}\cdot\frac{\rad(\gamma_t)}{\varphi(\rad(\gamma_t))}=\gamma_t.
\end{align*}
Therefore, $\varphi(m/\gamma_t)\gamma_t=\varphi(m)$ for all $t\in\{1,2,\ldots,k-2\}$. 

Summing up, we may select $c_0$ in $m$ ways; split each $p_j^{\alpha_j}$ between $c_1+1,c_2+1,\ldots,c_{k-1}+1$ in $\binom{\alpha_j+k-2}{\alpha_j}$ ways for every $j\in\{1,2,\ldots,s\}$; and for a fixed such a partition, determine exactly $\varphi^{k-2}(m)$ distinct solutions of the congruence $(6.8)$. Hence, we finally get that
\begin{align*}
    r_\mathcal{M}(i,m,k)=m\binom{\alpha_1+k-2}{\alpha_1}\binom{\alpha_2+k-2}{\alpha_2}\ldots\binom{\alpha_s+k-2}{\alpha_s}\varphi^{k-2}(m),
\end{align*}
which completes the proof of Theorem $6.15$.
\end{proof}

Instead of determining similar formulae for such values of $i$ that $\gcd(i,m)>1$ but $i\cdot\text{rad}(i)\nmid m$ --- which becomes more and more complex task --- we focus now on estimation the limit of $d(S_\mathcal{M}(0,m,k))$ as $k$ goes to infinity.

\begin{pr}
The equality
\begin{align*}
    \lim_{k\to\infty}d(S_\mathcal{M}(0,m,k))=1
\end{align*}
holds for any positive integer $m>1$.
\end{pr}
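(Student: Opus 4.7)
Since $\sum_{i=0}^{m-1}r_\mathcal{M}(i,m,k)=m^k$ and the equality $d(S_\mathcal{M}(i,m,k))=r_\mathcal{M}(i,m,k)/m^k$ is recorded immediately after Definition $6.8$, the claim is equivalent to showing that $\sum_{i=1}^{m-1}d(S_\mathcal{M}(i,m,k))$ tends to $0$ as $k\to\infty$. The plan is to bypass the recurrence of Proposition $6.9$ entirely and work directly from the explicit congruence supplied by Theorem $6.2$.

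The key observation is the following. For $n<m^k$ written in base $m$ as $n=c_{k-1}m^{k-1}+\ldots+c_1m+c_0$, Theorem $6.2$ yields $p_\mathcal{M}(n,k)\equiv(c_1+1)(c_2+1)\cdots(c_{k-1}+1)\pmod*{m}$. If even a single coefficient $c_j$ with $j\in\{1,\ldots,k-1\}$ equals $m-1$, then $c_j+1=m\equiv0\pmod*{m}$, so the product vanishes modulo $m$ and $p_\mathcal{M}(n,k)\equiv0\pmod*{m}$. Hence every residue $n\pmod*{m^k}$ contributing to a nonzero class of $p_\mathcal{M}(\cdot,k)$ modulo $m$ must satisfy $c_j\in\{0,1,\ldots,m-2\}$ for all $j\in\{1,\ldots,k-1\}$, while $c_0$ remains unrestricted.

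A direct count then shows that there are at most $m(m-1)^{k-1}$ such residues $n\pmod*{m^k}$, which, after dividing by $m^k$ and using the density identity above, yields
$$\sum_{i=1}^{m-1}d(S_\mathcal{M}(i,m,k))\leqslant\frac{m(m-1)^{k-1}}{m^k}=\left(\frac{m-1}{m}\right)^{k-1}.$$
Since $(m-1)/m<1$, the right-hand side tends to $0$ as $k\to\infty$. Combined with the trivial bound $d(S_\mathcal{M}(0,m,k))\leqslant 1$, this gives $\lim_{k\to\infty}d(S_\mathcal{M}(0,m,k))=1$, as required.

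I do not expect any genuine obstacle here: once Theorem $6.2$ is in hand, the argument is essentially combinatorial. The only subtlety worth flagging is that the proof rests on the crude sufficient condition $c_j=m-1\Rightarrow m\mid(c_j+1)$ instead of attempting to pin down exactly when the product $(c_1+1)(c_2+1)\cdots(c_{k-1}+1)$ is divisible by $m$; this loose estimate is already enough because the ratio $(m-1)/m$ is strictly less than $1$, so no finer divisibility analysis is needed to obtain the limit.
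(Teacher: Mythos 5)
Your argument is correct and is essentially identical to the paper's own proof: both use Theorem $6.2$ to reduce the question to counting base-$m$ digit strings, bound the number of $n \pmod*{m^k}$ with $p_\mathcal{M}(n,k)\not\equiv 0\pmod*{m}$ by $m(m-1)^{k-1}$ via the necessary condition $c_j\neq m-1$ for $j\geqslant 1$, and let $k\to\infty$. No issues.
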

\begin{proof}
Let $k,m\in\mathbb{N}_{\geqslant2}$ be fixed. By Theorem $6.2$ we get
\begin{align*}
    &\frac{\#\{n\pmod*{m^k}: p_\mathcal{M}(n,k)\not\equiv 0\pmod*{m}\}}{m^k}\\
    =&\frac{\#\{n\pmod*{m^k}: (c_1+1)(c_2+2)\ldots(c_{k-1}+1)\not\equiv 0\pmod*{m}\}}{m^k},
\end{align*}
where the coefficients $c_j$ satisfy 
\begin{align*}
    n \pmod*{m^k} = c_{k-1}m^{k-1}+c_{k-2}m^{k-2}+\ldots+c_1m+c_0,
\end{align*}
that is a representation $n \pmod*{m^k}$ in base $m$. We can choose $c_{0}$ in $m$ ways and $c_{j}$ in at most $m-1$ ways for $j\in\{1,2,\ldots,k-1\}$. Thus,
\begin{align*}
    &\frac{\#\{n\pmod*{m^k}: (c_1+1)(c_2+2)\ldots(c_{k-1}+1)\not\equiv 0\pmod*{m}\}}{m^k}   \leqslant\frac{m(m-1)^{k-1}}{m^k},
\end{align*}
which goes to zero as $k\to\infty$. Finally, we conclude that
\begin{align*}
    1\geqslant\lim_{k\to\infty}d(S_\mathcal{M}(0,m,k))\geqslant1-\lim_{k\to\infty}\frac{m(m-1)^{k-1}}{m^k}=1,
\end{align*}
which completes the proof.
\end{proof}

\section{Numerical computations and open problems}
In the final section we present results of numerical computations of the density of $\{n\in\mathbb{N}: p_\mathcal{A}(n,k)\not\equiv0 \pmod*{m}\}$ for some sequences $\mathcal{A}$ and values of parameter $m$. Theorems $3.2$ and $3.3$ state that the function $p_\mathcal{A}(n,k)\pmod*{m}$ is periodic on $n$. Thus, we can reduce our consideration to a finite set. The following computations carried out in Wolfram Mathematica \cite{WM} give us exact values of densities. 
%In order to simplify the record, let us introduce an additional notation. 
First, if $\mathcal{A}_2=(n^2)_{n\in\mathbb{N}_+}$, then we find
\bigskip
\begin{center}
%\resizebox{\columnwidth}{!}{%
\begin{tabular}{ |c|c|c|c|c|c|c|c|c|c|c| } 
\hline
$k$ & $1$ & $2$ & $3$ & $4$ & $5$ & $6$ & $7$ & $8$ & $9$ & $10$ \\
\hline
$d(S_{\mathcal{A}_2}(1,2,k))$ & $1$ & $\frac{1}{2}$ & $\frac{13}{36}$ & $\frac{37}{144}$ & $\frac{1}{2}$ & $\frac{299}{600}$ & $\frac{253}{504}$ & $\frac{14113}{28224}$& $\frac{317311}{635040}$ & $\frac{264659}{529200}$ \\ 
\hline
\end{tabular}%
%}
\end{center}
\begin{center}
Table 1. The values of $d(S_{\mathcal{A}_2}(1,2,k))$ for $1\leqslant k\leqslant 10$.
\bigskip
\end{center}
%Moreover, let us present results for $m=3,4,5$ too. In the first case, we have
Moreover let us present results for $m=3,4,5$ in the table below, in which the densities for distinct values of $m$ are separated by double lines.
\bigskip
\begin{center}
\begin{tabular}{ |c|c|c|c|c|c|c|c|c|c|c|c| } 
\hline
$k$ & $1$ & $2$ & $3$ & $4$ & $5$ & $6$ & $7$ & $8$ \\
\hline
$d(S_{\mathcal{A}_2}(0,3,k))$ & $0$ &$\frac{1}{3}$ & $\frac{5}{12}$ & $\frac{17}{36}$ & $\frac{587}{1800}$ & $\frac{361}{1080}$ & $\frac{58741}{176400}$ & $\frac{22061}{66150}$  \\ 
\hline
$d(S_{\mathcal{A}_2}(1,3,k))$ &$ 1 $ & $\frac{1}{3}$ & $\frac{11}{36}$ & $\frac{19}{72}$ & $\frac{59}{180}$ & $\frac{719}{2160}$ & $\frac{58711}{176400}$ & $\frac{44089}{132300}$  \\ 
\hline
$d(S_{\mathcal{A}_2}(2,3,k))$ & $0$ &$\frac{1}{3}$ & $\frac{5}{18}$ & $\frac{19}{72}$ & $\frac{623}{1800}$ & $\frac{719}{2160}$ & $\frac{14737}{44100}$ & $\frac{44089}{132300}$  \\ 
\hline
\hline
$d(S_{\mathcal{A}_2}(0,4,k))$ & $0$ &$\frac{1}{4}$ & $\frac{13}{36}$ & $\frac{107}{288}$ & $\frac{179}{720}$ & $\frac{113}{450}$ & $\frac{17551}{70560}$ & $\frac{352999}{1411200}$  \\ 
\hline
$d(S_{\mathcal{A}_2}(1,4,k))$ &$1$ & $\frac{1}{4}$ & $\frac{5}{24}$ & $\frac{37}{288}$ & $\frac{21}{80}$ & $\frac{299}{1200}$ & $\frac{4433}{17640}$ & $\frac{14113}{56448}$  \\ 
\hline
$d(S_{\mathcal{A}_2}(2,4,k))$ & $0$ &$\frac{1}{4}$ & $\frac{5}{18}$ & $\frac{107}{288}$ & $\frac{181}{720}$ & $\frac{451}{1800}$ & $\frac{5863}{23520}$ & $\frac{117517}{470400}$  \\
\hline
$d(S_{\mathcal{A}_2}(3,4,k))$ & $0$ &$\frac{1}{4}$ & $\frac{11}{72}$ & $\frac{37}{288}$ & $\frac{19}{80}$ & $\frac{299}{1200}$ & $\frac{737}{2940}$ & $\frac{14113}{56448}$  \\
\hline
\hline
$d(S_{\mathcal{A}_2}(0,5,k))$ & $0$ &$\frac{1}{5}$ & $\frac{41}{180}$ & $\frac{17}{60}$ & $\frac{56}{225}$ & $\frac{1}{5}$ & $\frac{3917}{19600}$ & $\frac{352327}{1764000}$  \\ 
\hline
$d(S_{\mathcal{A}_2}(1,5,k))$ &$1$ & $\frac{1}{5}$ & $\frac{1}{5}$ & $\frac{73}{360}$ & $\frac{281}{1800}$ & $\frac{1}{5}$ & $\frac{8843}{44100}$ & $\frac{706799}{3528000}$  \\ 
\hline
$d(S_{\mathcal{A}_2}(2,5,k))$ & $0$ &$\frac{1}{5}$ & $\frac{13}{60}$ & $\frac{7}{45}$ & $\frac{199}{900}$ & $\frac{1}{5}$ & $\frac{11801}{58800}$ & $\frac{117479}{588000}$  \\
\hline
$d(S_{\mathcal{A}_2}(3,5,k))$ & $0$ &$\frac{1}{5}$ & $\frac{1}{6}$ & $\frac{7}{45}$ & $\frac{377}{1800}$ & $\frac{1}{5}$ & $\frac{1459}{7350}$ & $\frac{117479}{588000}$  \\
\hline
 $d(S_{\mathcal{A}_2}(4,5,k))$ & $0$ &$\frac{1}{5}$ & $\frac{17}{90}$ & $\frac{73}{360}$ & $\frac{37}{225}$ & $\frac{1}{5}$ & $\frac{8839}{44100}$ & $\frac{706799}{3528000}$  \\
\hline
\end{tabular}
\end{center}
\begin{center}
Table 2. The values of $d(S_{\mathcal{A}_2}(i,m,k))$ for $3\leqslant m\leqslant 5,$ $0\leqslant i\leqslant m-1$ and $1\leqslant k\leqslant 8$.
\end{center}
\bigskip

On the one hand, the foregoing instances can suggest us the following bold question, which agrees with computational results obtained by Ulas in \cite{MU}.
\begin{con}
Let $\mathcal{A}_d=(n^d)_{n\in\N_+}$ for $d\in\N_{\geqslant2}$. Does the following equality hold
\begin{align*}
    \lim_{k\to\infty}\lim_{N\to\infty}\frac{\#\{n\leqslant N:p_{\mathcal{A}_d}(n,k)\equiv i\pmod*{m}\}}{N}=\frac{1}{m},
\end{align*}
for any arbitrarily fixed parameters $m\geqslant2$ and $i\in\{0,1,\ldots,m-1\}$? 
\end{con}
Our numerical data suggest that, if $2|k$, then $d(S_{\mathcal{A}_2}(j,m,k))=\linebreak d(S_{\mathcal{A}_2}(m-j,m,k))$ for all $j\in\{1,2,\ldots,m-1\}$ and $m\in\{2,3,4,5\}$. Therefore, the following general question arises.
\begin{con}
For a given positive integer $d$, let $\mathcal{A}_d=(n^d)_{n\in\mathbb{N}_+}$. Does the formula
%Let $d, k$ and $m$ be given positive integers such that $m\geqslant2$ and $2\mid k$, and let $\mathcal{A}_d=(n^d)_{n\in\mathbb{N}_+}$. Does the formula
\begin{align*}
   d(S_{A_d}(j,m,k))=d(S_{A_d}(m-j,m,k))
\end{align*}
hold for any arbitrarily fixed parameters $k,m\geqslant2$ and $j\in\{1,2,\ldots,m-1\}$ such that $2\mid k$? 
\end{con}

Now, we focus only on the odd density of the function $p_\mathcal{A}(n,k)$ for some special cases of the sequence $\mathcal{A}$. Consequently, let us denote by $$\mathcal{T}=\left(\frac{n(n+1)}{2}\right)_{n\in\mathbb{N}_+},\hspace{1cm} \mathcal{P}=\left(\frac{3n^2-n}{2}\right)_{n\in\mathbb{N}_+},\hspace{1cm} \mathcal{H}=\left(2n^2-n\right)_{n\in\mathbb{N}_+}$$ sequences of positive triangular, pentagonal and hexagonal numbers, respectively. Additionally, we assume that $\mathcal{S}=((n-1)^2+1)_{n\in\mathbb{N}_+}$, that is a collection of square numbers increased by one. The next table demonstrates our results of computations.
%and by $S_\mathcal{A}$ we understand $S_\mathcal{A}=\{n\in\N:p_\mathcal{A}(n,k)\text{ is odd}\}$ for an arbitrarily fixed sequence $\mathcal{A}$. The next table demonstrates our outcomes.
\bigskip
\begin{center}
\begin{tabular}{ |c|c|c|c|c|c|c|c|c|c|c|c| } 
\hline
$k$ & $1$ & $2$ & $3$ & $4$ & $5$ & $6$ & $7$ & $8$ \\
\hline
 $d(S_\mathcal{T}(1,2,k))$ & $1$ &$\frac{1}{2}$ & $\frac{1}{4}$ & $\frac{2}{5}$ & $\frac{7}{20}$ & $\frac{129}{280}$ & $\frac{11}{24}$ & $\frac{2453}{5040}$  \\ 
\hline
$d(S_{\mathcal{P}}(1,2,k))$ & $1$ & $\frac{1}{2}$ & $\frac{2}{5}$ & $\frac{219}{440}$ & $\frac{1}{2}$ & $\frac{713}{1496}$ & $\frac{1867}{3740}$ & $\frac{516287}{1032240}$  \\ 
\hline
$d(S_{\mathcal{H}}(1,2,k))$ & $1$ &$\frac{1}{2}$ & $\frac{7}{20}$ & $\frac{83}{168}$ & $\frac{1}{2}$ & $\frac{2311}{4620}$ & $\frac{22559}{45045}$ & $\frac{39931}{80080}$  \\
\hline
$d(S_{\mathcal{S}}(1,2,k))$ & $1$ & $\frac{1}{2}$ &$\frac{7}{20}$ & $\frac{3}{10}$ & $\frac{149}{340}$ & $\frac{1}{2}$ & $\frac{81851}{163540}$  & $\frac{102196}{204425}$  \\
\hline
\end{tabular}
\end{center}
\begin{center}
Table 3. The values of $d(S_{\mathcal{A}}(1,2,k))$ for $\mathcal{A}\in\{\mathcal{T, P, H, S}\}$ and $1\leqslant k\leqslant 8$.
\end{center}
\bigskip
These results can misled us to believe that 
\begin{align*}
    \lim_{k\to\infty}\lim_{N\to\infty}\frac{\#\{n\leqslant N: p_\mathcal{A}(n,k)\equiv1\pmod*{2}\}}{N}=\frac{1}{2},
\end{align*}
for any arbitrarily fixed sequence $\mathcal{A}$ of positive integers such that $\gcd \mathcal{A}=1$. However, in Sec. 6 we point out that for binary partitions this limit goes to zero. Nevertheless, we can add another one assumption, which eliminate counterexamples from the previous section. We state the following open question.

\begin{con}
Let $\mathcal{A}$ be a fixed sequence of positive integers such that $\gcd\mathcal{A}=1$ and the set of prime divisors of $\mathcal{A}$ is infinite. Does the equality
\begin{align*}
    \lim_{k\to\infty}\lim_{N\to\infty}&\frac{\#\{n\leqslant N: p_\mathcal{A}(n,k) \equiv 1 \pmod*{2}\}}{N}=\frac{1}{2}
\end{align*}
hold?
\end{con}
%Let us underline that we find a sequence, namely $\mathbb{B}=\{2^{n-1}\}_{n\in\N_+$, which satisfies
%\begin{align*}
  %  \lim_{k\to\infty}\lim_{N\to\infty}\frac{\#\{n\leqslant N: p_B(n,k)\equiv1\pmod*{2}\}}{N}=0.
%\end{align*}
%In addition, it is worth observing that the odd density of $p_B(n,k)$, for a fixed $k$, is not far from the lower bound: $(2^{k}-1)^{-1}$. Finally, we conclude the paper with presenting a pair of questions which naturally arise from our previous investigation.

Finally, we may ask about both the possibility of achieving lower bound of the odd density and the existence of a special family of sequences such that the above equality holds for its elements. 
\begin{con}
Is it possible to find such a sequence $\mathcal{A}$ of positive integers that $\gcd \mathcal{A}=1$ and
\begin{align*}
    \lim_{N\to\infty}&\frac{\#\{n\leqslant N: p_\mathcal{A}(n,k) \equiv 1 \pmod*{2}\}}{N}=\frac{1}{\sum_{i=1}^ka_i}
\end{align*}
for some $k\geqslant1$?
\end{con}
\begin{con}
Is it possible to determine an infinite family of sequences $\mathfrak{A}$ such that for each $\mathcal{A}\in\mathfrak{A}$, $\mathcal{A}$ is a sequence of positive integers, $\gcd \mathcal{A}=1$ and
\begin{align*}
    \lim_{k\to\infty}\lim_{N\to\infty}&\frac{\#\{n\leqslant N: p_\mathcal{A}(n,k) \equiv 1 \pmod*{2}\}}{N}=\frac{1}{2}?
\end{align*}
\end{con}

\section*{Acknowledgements}
The author would like to thank Maciej Ulas and Piotr Miska for their time, effort and guidance.

%\section*{Acknowledgements}

\end{document}